\DeclareMathSymbol{\mathinvertedexclamationmark}{\mathord}{operators}{'074}
\DeclareMathSymbol{\mathexclamationmark}{\mathord}{operators}{'041}
\newcommand{\raisedmathinvertedexclamationmark}{%
  \mathord{\mathpalette\raised@mathinvertedexclamationmark\relax}%
}
\newcommand{\raised@mathinvertedexclamationmark}[2]{%
  \raisebox{\depth}{$\m@th#1\mathinvertedexclamationmark$}%
}
\newcommand{\initial}{\raisedmathinvertedexclamationmark}
\newtheorem{thm}{Theorem}[section]
\newtheorem*{thm*}{Theorem}
\newtheorem{lem}[thm]{Lemma}
\newtheorem{cor}[thm]{Corollary}
\newtheorem{prop}[thm]{Proposition}
\newtheorem{Hypotheses}[thm]{Hypotheses}
\newtheorem*{prob*}{Open problem}
\theoremstyle{definition}
\newtheorem{ex}[thm]{Example}
\newtheorem{defi}[thm]{Definition}
\newtheorem{definition}[thm]{Definition}
\newtheorem{notation}[thm]{Notation}
\theoremstyle{remark}
\newtheorem{rem}[thm]{Remark}
\newtheorem*{rem*}{Remark}
\DeclareMathOperator{\id}{id}
\DeclareMathOperator{\Hom}{Hom}
\newcommand{\kringel}{\mathbin{\raise0.5pt\hbox{$\scriptstyle\circ$}}}
\newcommand{\pkt}{\mathbin{\raise0.5pt\hbox{$\scriptstyle\bullet$}}}
\newcommand{\sq}{\mathbin{\raise0.5pt\hbox{$\scriptscriptstyle\square$}}}
\newcommand{\N}{\mathbb{N}}
\renewcommand{\S}{\mathbb{S}}
\newcommand{\End}{{\rm End}}
\newcommand{\CC}{\mathcal{C}}
\newcommand{\CP}{\mathcal{P}}
\newcommand{\CF}{\mathcal{F}}
\newcommand{\coker}{{\rm coker}}
\newcommand{\crmod}{{\rm CrMod}}
\renewcommand{\phi}{\varphi}
\mathchardef\myhyphen="2D 
\renewcommand{\CP}{\mathscr{P}}
\renewcommand{\CC}{\mathscr{C}}
\setlist[enumerate,1]{
    label=(\alph*),
    leftmargin =3em
}
\setlist[enumerate,2]{
    label=\roman*),
    leftmargin =3em
}
\setlist[itemize,1]{
    label=---,
    leftmargin =2em
}
\tikzset{>=stealth'}
\NewDocumentCommand{\createbunch}{ m O{} m }
 {
  \clist_map_inline:nn { #3 } { \cs_new_protected:cpn { #2 ##1 } { #1 { ##1 } } }
 }
\createbunch{\mathbb}   [bb]{A,B,C,D,E,F,G,H,I,J,K,L,M,N,O,P,Q,R,S,T,U,V,W,X,Y,Z}
\createbunch{\mathcal}  [cal]{A,B,C,D,E,F,G,H,I,J,K,L,M,N,O,P,Q,R,S,T,U,V,W,X,Y,Z}
\createbunch{\mathscr}  [scr]{A,B,C,D,E,F,G,H,I,J,K,L,M,N,O,P,Q,R,S,T,U,V,W,X,Y,Z}
\createbunch{\mathsf}   [sf]{A,B,C,D,E,F,G,H,I,J,K,L,M,N,O,P,Q,R,S,T,U,V,W,X,Y,Z}
\createbunch{\mathrm}   [rm]{A,B,C,D,E,F,G,H,I,J,K,L,M,N,O,P,Q,R,S,T,U,V,W,X,Y,Z,a,b,c,d,e,f,g,h,i,j,k,l,m,o,p,q,r,s,t,u,v,w,x,y,z}
\createbunch{\mathfrak} [frak]{A,B,C,D,E,F,G,H,I,J,K,L,M,N,O,P,Q,R,S,T,U,V,W,X,Y,Z}
\createbunch{\mathfrak} [frak]{a,b,c,d,e,f,g,h,i,j,k,l,m,n,o,p,q,r,s,t,u,v,w,x,y,z}
\newcommand{\Lie}{\mathcal{L}ie}
\newcommand{\As}{\calA s}
\newcommand{\susp}{\rms}
\newcommand{\asusp}{\rms^{-1}}
\DeclareMathOperator{\Ker}{\mathrm{ker}}
\renewcommand{\Bar}{\operatorname{B}}
\newcommand{\too}{\longrightarrow}
\newcommand{\field}{\mathbb{k}}
\renewcommand{\geq}{\geqslant}
\renewcommand{\leq}{\leqslant}
\begin{document}

\title{Crossed modules and cohomology of algebras over an operad}

\author{Johan Leray}
\author{Salim Rivi\`ere}
\author{Friedrich Wagemann}

\date{\today}

\subjclass[2020]{}
\keywords{}

\begin{abstract}
  We introduce a general definition of a $n$-crossed module of $\scrP$-algebras over an algebraic operad $\CP$, which coincides with historical definitions in the cases of the operads $\As$ of $\Lie$ and $n = 1$. We establish a natural isomorphism between the abelian group of equivalence classes of $n$-crossed modules over a pair $(A,M)$ for an operad $\CP$ and the $(n+1)^\text{st }$ operadic cohomology group of $A$ with coefficients in $M$.
\end{abstract}

\maketitle


\section*{Introduction}

Crossed modules of groups appeared first in work of J. H. C. Whitehead \cite{W} on the relative homotopy groups of the attachment of cells to a $1$-skeleton. A {\it crossed module of groups} is a morphism of groups $\mu:M\to N$ together with an action of $N$ on $M$ by automorphisms such that $\mu$ is equivariant and such that the adjoint action on $M$ can be obtained by sending elements of $M$ to $N$ via $\mu$ and letting them then act again on $M$. Mac Lane and Whitehead \cite{MLW} established a link between crossed modules and degree $3$ cohomology classes. According to Mac Lane's historical note \cite{ML}, Gerstenhaber \cite{G1}, \cite{G2}  was the   first to establish the theorem that degree $3$ group cohomology corresponds bijectively to equivalence classes of crossed modules. Moreover, Gerstenhaber extends this kind of theorem to more general algebraic structures including the theorems that equivalence classes of crossed modules of associative algebras correspond bijectively to the third Hochschild cohomology group, and that equivalence classes of crossed modules of Lie algebras correspond to the third Chevalley-Eilenberg cohomology group. However, fundamental work on crossed modules was also performed not much later by Brown \cite{B}, Huebschmann \cite{H}, Duskin \cite{D}, Ratcliffe \cite{R} and many others. The interested reader may consult these original articles or the recent monograph \cite{Wa}.   

The article \cite{Baues:2004} of Baues, Minian and Richter adresses a generalization of these questions to the operadic setting. In fact, they describe crossed modules of $\CP$-algebras as algebras over a secondary operad associated to $\CP$ and focus then on a cohomology theory built from splicing together long exact sequences with crossed modules. The relation of this cohomology theory to Hochschild theory in the case of associative algebras and to Harrison cohomology in the case of associative commutative algebras is established, but the link to general operadic cohomology is missing. In the present article, our goal is to link crossed modules of $\CP$-algebras to operadic cohomology in order to show that equivalence classes of crossed modules correspond bijectively to cohomology classes.    

For this, we present first of all a general definition (slightly different from the one of Baues, Minian and Richter in \cite{Baues:2004}) of a crossed module of $\CP$-algebras. 
Namely, an $n$-crossed module of $\CP$-algebras is simply a $\CP$-algebra structure on a length $n$ complex of vector spaces (\emph{i.e.} having possibly $n+1$ non-zero terms). 
We show that crossed modules of associative algebras and crossed modules of Lie algebras fit into this setting. 
Then we go on establishing a link to the operadic cohomology, denoted for a $\CP$-algebra $A$ and an $A$-module $M$ by $\rmH^{m}_\CP(A;M)$. 
Namely, we show that the Homotopy Transfer Theorem (HTT) can be used to associate to a crossed module a cocycle and the Rectification Theorem (RT) can be used in order to associate a crossed module to a cocycle. These two theorems are well known in operad theory (see \cite[Theorem 10.3.2 and Theorem 11.4.7]{Loday:2012}). The HTT describes how the $\CP$-algebra structure on a dg $\CP$-algebra $(A,d)$ can be transferred into a $\CP_\infty$-structure on a homotopy retract of the complex $(A,d)$. In our framework, we will transfer the $\CP$-algebra structure of an $n$-crossed module with "kernel" $M$ and "cokernel" $A$ to a $\CP_\infty$-structure on $A\oplus M[n]$. This describes it as a cocycle in the appropriate setting. 

The second step (in fact, the return step) is the Rectification Theorem. The RT shows how a $\CP_\infty$-algebra structure may be rectified to a $\CP$-algebra structure on a much bigger complex (passage from $A$ to $\Omega B A$). We show how to truncate this much bigger complex to a quasi-isomorphic length $n$ complex (which is still a $\CP$-algebra) when starting with a cocycle. 

Denote for an integer $n\geq 1$, a $\CP$-algebra $A$ and an $A$-module $M$ by $\crmod^n_\CP(A,M)$ the set of $n$-crossed modules over $A\oplus M[n]$. It carries an equivalence relation (see Definition \ref{definition_equivalence}) with associated set of equivalence classes  denoted by
$$\crmod^n_\CP(A,M)/\sim.$$ 
We show in Section 4 that there is a Baer-sum on crossed modules yielding an abelian group structure on $\crmod^n_\CP(A,M)/\sim$. 

The main theorem reads:

\begin{thm*}[cf. \cref{main_theorem}]
  Let $\CP$ be an augmented operad concentrated in degree zero, let $n$ be a positive integer, let $A$ and $M$ be respectively a $\CP$-algebra and an $A$-module, both concentrated in degree $0$. 
  There is a natural isomorphism of abelian groups
  \[ \crmod^n_\CP(A,M)/\sim\quad \cong\,\,\rmH^{n+1}_\CP(A;M). \]
\end{thm*}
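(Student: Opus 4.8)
The plan is to construct the isomorphism in both directions using the Homotopy Transfer Theorem and the Rectification Theorem, and then verify compatibility with the respective abelian group structures.

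\medskip

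\noindent\textbf{Construction of the map $\crmod^n_\CP(A,M)/\sim\ \to\ \rmH^{n+1}_\CP(A;M)$.}
Given an $n$-crossed module, i.e.\ a $\CP$-algebra structure on a length $n$ complex $C_\bullet$ with $\rmH_0(C_\bullet)=A$ and $\rmH_n(C_\bullet)=M$ (and acyclic in between), I would first choose a homotopy retract of the underlying complex $(C_\bullet,d)$ onto its homology $A\oplus M[n]$; such a retract exists over the ground field $\field$ since every complex of vector spaces is formal. Applying the HTT (\cite[Theorem 10.3.2]{Loday:2012}) transports the strict $\CP$-algebra structure on $C_\bullet$ to a $\CP_\infty$-algebra structure on $A\oplus M[n]$. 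Because $A$ and $M$ are concentrated in degree $0$ and $\CP$ is concentrated in degree zero, the only potentially nonzero higher operations are the strict $\CP$-structure on $A$, the $A$-module structure on $M[n]$, and one extra operation landing in $M[n]$ of homological degree $n$ built from $\CP(j)$-operations; unwinding the $\CP_\infty$-relations shows this last datum is exactly an $(n+1)$-cocycle in the operadic (André--Quillen / (co)operadic) cochain complex $C^\bullet_\CP(A;M)$ computing $\rmH^\bullet_\CP(A;M)$. The main work here is to check that changing the chosen retract, or replacing the crossed module by an equivalent one, changes the cocycle only by a coboundary; this I would handle by the homotopy-uniqueness part of the HTT (any two transferred structures are $\infty$-isotopic) together with an explicit analysis of what an $\infty$-isotopy does in this truncated bidegree.

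\medskip

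\noindent\textbf{Construction of the inverse map $\rmH^{n+1}_\CP(A;M)\ \to\ \crmod^n_\CP(A,M)/\sim$.}
Starting from an $(n+1)$-cocycle, I would view it as a $\CP_\infty$-structure on $A\oplus M[n]$ extending the given $\CP$-algebra and module structures (the converse of the unwinding above). Applying the Rectification Theorem (\cite[Theorem 11.4.7]{Loday:2012}) produces a genuine $\CP$-algebra $\Omega\Bar(A\oplus M[n])$ quasi-isomorphic to it. This dg $\CP$-algebra is typically unbounded, so the key construction is a functorial truncation: I would replace $\Omega\Bar(A\oplus M[n])$ by a quasi-isomorphic $\CP$-algebra concentrated in homological degrees $0$ through $n$ — concretely, take a truncation $\tau_{\le n}$ in degrees $<n$, keep the degree-$0$ part (quasi-isomorphic to $A$), and adjust the degree-$n$ term to be the appropriate kernel so that the homology is $A$ in degree $0$ and $M$ in degree $n$. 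One must check the $\CP$-algebra structure descends to this truncation; since $\CP$ is concentrated in degree $0$, operations cannot raise homological degree, so the truncation ideal is a $\CP$-ideal and the quotient inherits the structure. Well-definedness on cohomology classes follows because cohomologous cocycles give $\infty$-isotopic $\CP_\infty$-structures, hence (by functoriality of $\Omega\Bar$ and of the truncation) equivalent crossed modules.

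\medskip

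\noindent\textbf{The two composites are the identity, and additivity.}
That the round trips are the respective identities follows from the standard fact that $\Omega\Bar$-rectification followed by HTT-transfer back to the homology recovers the original $\CP_\infty$-structure up to $\infty$-isotopy, combined with the observation that HTT-transfer is inverse, up to isotopy, to the formal inclusion; at the level of equivalence classes and cohomology classes these isotopies are invisible. Finally, I would check that the Baer sum on $\crmod^n_\CP(A,M)/\sim$ from Section~4 corresponds under this bijection to addition of cocycles: both are computed by a pullback/pushout (Baer) construction along the diagonal $A\to A\times A$ and codiagonal $M\times M\to M$, and transfer/rectification are compatible with direct sums, so the bijection is a homomorphism, hence an isomorphism of abelian groups. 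Naturality in the pair $(A,M)$ is then a matter of tracking these constructions through a morphism of pairs. The step I expect to be the main obstacle is the well-definedness on equivalence classes in the forward direction: pinning down precisely which $\CP_\infty$-isotopies of $A\oplus M[n]$ arise from equivalences of crossed modules, and conversely, requires a careful bookkeeping of the truncated homotopies and is where the proof is least formal.
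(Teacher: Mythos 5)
Your proposal follows essentially the same route as the paper: HTT to extract the $(n+1)$-cocycle from a crossed module, Rectification plus a good truncation to go back, $\infty$-morphisms with identity linear part to handle well-definedness on equivalence classes, and the pushout Baer sum for additivity. The only imprecisions are that the degree-$n$ term of the truncation must be the \emph{cokernel} of the incoming differential (not a kernel) to preserve $\rmH_n$, and that the paper additionally reduces the resulting $\infty$-quasi-isomorphisms to zig-zags of strict quasi-isomorphisms via $\Omega_\alpha \Bar_\iota$, since its equivalence relation on crossed modules is generated by strict maps.
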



While the HTT works in a general properadic setting (see \cite{HLV}), the RT works, as far as we know, only for operads. Therefore only one half of our theorem may be generalized to properads, namely there exists a well-defined natural map
  \[ \crmod^n_\CP(A,M)/\sim\quad \too\,\,\rmH^{n+1}_\CP(A;M). \]
It would be interesting to investigate the classes in Gerstenhaber-Schack cohomology which one obtains in this way from crossed modules of associative bialgebras.


  %

\subsection*{Conventions}\label{Conventions}
We work over a field $\field$ of characteristic $0$
We adopt the {\it homological convention}, i.e. dg vector spaces have a differential which lowers degree. We denote by $\susp$ the dg vector space given by the ground field $\field$ concentrated in degree $1$, and $\asusp \coloneqq \Hom_k(\susp,\field)$ which is concentrated in degree $-1$. So we have the following Koszul sign rule $\asusp\susp = 1 = -\susp\asusp$. For a dg vector space $M$, we will use the notation $M[n]$ for the suspension  $\susp^n\otimes M$.

\subsection*{Notations}


Note that we do not write the composition of maps with a sign $\circ$. This sign is reserved for the circle product of ${\mathbb S}$-modules. Instead, we just put a little space between the maps.

\subsection*{Aknowledgement} The authors warmly thank Geoffrey Powell for his careful reading and many comments on the first version of this paper.


\section{Preliminairies on operads and their algebras}

Let us recall the parts of the general framework of operads and their algebras that we need for our article. The main reference is  \cite{Loday:2012}. Recall that an $\mathbb{S}$-module $M = \{M(n), n \in\N\}$ is a collection of dg vector spaces indexed by integers equipped with an action of the symmetric group $\mathbb{S}_n \times M(n) \to M(n)$. To an $\bbS$-module $\CP$, one associate the endofunctor of the category of dg vector spaces $A \mapsto \CP(A) = \bigoplus_n \CP(n) \otimes_{\bbS_n} A^{\otimes n}$ called the \emph{Schur functor}. The Schur functors $A\mapsto\CP(A)$ admits also linear and $N$-multilinear versions where one considers only linear operations in one slot. 
The functor $A\mapsto\CP(A)$ is in general not additive, but analytic, i.e. there exists a decomposition
\[
\CP(A\oplus N)=\CP(A)\oplus\CP(A;N)\oplus\CP(A\oplus N)^{(\geqslant 2)_N}.
\]
Here $\CP(A;N)$ is the space of $\CP$-operations on $A\oplus N$ with exactly one entry in $N$, and $\CP(A\oplus N)^{(\geqslant 2)_N}$ denotes all the remaining parts, i.e. the $\CP$-operations which have two and more entries in $N$ and which are thus $2$- or more $N$-linear. Let us define in general $\CP(A \oplus N)^{(l)_N}$ to be the $l$-linear part in $N$, i.e. the $\CP$-operations on $A\oplus N$ with exactly $l$ entries in $N$. We have thus by definition $\CP(A \oplus N)^{(1)_N} = \CP(A; N)$.

\subsection*{Operads and cooperads} By definition, a {\it symmetric operad}  $\CP$ is a monoid in the monoidal category $(\S\myhyphen\mathsf{mod}, \circ, \mathcal{I})$, see  \cite[Sect. 5.2.1]{Loday:2012}, where $\circ$ is the composition product of $\S$-modules, see Sect. 5.1.4 in {\it loc. cit.}. Such monoid structure on $\CP$ is equivalent to a monad structure on the associated Schur functor (see \cite[Sect. 5.1.2]{Loday:2012}). The symbol $\CF(M)$ denotes the {\it free operad} on the $\S$-module $M$, see \cite[Sect. 5.5]{Loday:2012} which is described in terms of rooted trees. Cooperads are defined similarly as comonoids, see Sect. 5.8 in {\it loc. cit.}. There is a similar functor $\CF^c$ defining the cofree cooperad, Sect. 5.8.6 in {\it loc. cit.}. 
The notions of connected (se  \cite[Sect. 6.3.11]{Loday:2012}) coaugmented (see \cite[Sect. 5.8.1]{Loday:2012}) and conilpotent (see  \cite[Sect. 5.8.5]{Loday:2012})cooperad  are transferred of theory of coalgebras.


The linear parts in ${M}$ and ${N}$ of ${M}\circ{N}$ define the {\it infinitesimal composition product} ${M}\circ_{(1)}{N}$, see Sect. 6.1.1 in {\it loc. cit.}. Given an operad $(\CP,\gamma,\eta)$ and a cooperad $(\CC,\triangle,\epsilon)$, the {\it convolution product} of two elements $f,g\in\Hom_\S(\CC,\CP)$ (see \cite[Prop. 6.4.3.]{Loday:2012})  is defined by the composition
\[
  f\star g:\CC
  \xrightarrow{\triangle_{(1)}}
  \CC\circ_{(1)}\CC
  \xrightarrow{f\circ_{(1)} g}
  \CP\circ_{(1)}\CP
  \xrightarrow{\gamma_{(1)}}
  \CP.
\]

\subsection*{Bar-cobar adjunction for operads} A {\it twisting morphism} $\alpha\in\Hom_\S(\CC,\CP)$ is a degree $-1$ solution to the Maurer-Cartan equation 
\[
  \partial(\alpha)+\alpha\star\alpha = 0,
\]
where $\partial$ is the differential on the Hom-space induced by the differentials on $\CC$ and $\CP$.

The {\it bar construction} (Sect. 6.5.1 in {\it loc. cit.}) of an augmented dg operad $(\CP,d_\CP,\gamma,\eta,\epsilon)$ is the quasi-cofree conilpotent dg cooperad 
\[
  \Bar\CP\coloneqq (\CF^c(\susp\overline{\CP}),d_1+d_2),
\] 
where $\overline{\CP}$ is the kernel of the augmentation, a.k.a. the augmentation ideal, $d_1$ is the unique coderivation extending the internal differential of $\CP$ and $d_2$ is the unique coderivation extending the infinitesimal composition map $\gamma_{(1)}$. The symbol $\susp$ denotes the degree shift (see the \nameref{Conventions}). Similarly, the {\it cobar construction} (Sect 6.5.2 in {\it loc. cit.})
associates to a coaugmented dg cooperad $(\CC,d_\CC,\triangle,\epsilon,\eta)$ the augmented quasi-free operad 
\[
\Omega\CC\coloneqq (\CF(s^{-1}\overline{\CC}),d_1+d_2),
 \]
where again $\overline{\CC}$ is the cokernel of the coaugmentation,  $d_1$ stems from the internal differential and $d_2$ from the infinitesimal decomposition map $\triangle_{(1)}$. The \emph{Rosetta stone} relates twisting morphisms and maps from resp. to the cobar and bar construction:
 
\begin{prop}[{\cite[Theorem 6.5.7]{Loday:2012}}]  
For every augmented operad $\scrP$ and every conilpotent cooperad $\scrC$, there exist natural isomorphisms
\[
\Hom_{\mathsf{operads^{\rm aug}}}(\Omega\CC,\CP)\cong{\rm Tw}(\CC,\CP)\cong \Hom_{\mathsf{cooperads^{\rm coaug}}}(\CC,\Bar\CP),
\] 
In particular, the functors $\Omega$ and $\Bar$ are adjoint.
\end{prop}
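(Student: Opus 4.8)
The plan is to prove the two isomorphisms separately, each by invoking the universal property of a (co)free (co)operad and then translating compatibility with differentials into the Maurer--Cartan equation; naturality and the adjunction then follow formally.

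First I would establish $\Hom_{\mathsf{operads^{\rm aug}}}(\Omega\CC,\CP)\cong{\rm Tw}(\CC,\CP)$. Since $\Omega\CC=(\CF(\asusp\overline{\CC}),d_1+d_2)$ is quasi-free on the $\S$-module $\asusp\overline{\CC}$, the universal property of the free operad $\CF$ identifies a morphism of graded augmented operads $\CF(\asusp\overline{\CC})\to\CP$ with an $\S$-module map $\asusp\overline{\CC}\to\CP$, equivalently (precomposing with the degree $-1$ desuspension $\overline{\CC}\to\asusp\overline{\CC}$ and extending by zero along the coaugmentation) a degree $-1$ map $\alpha\colon\CC\to\CP$. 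It then remains to check that such a morphism intertwines $d_1+d_2$ with $d_\CP$ if and only if $\alpha$ solves $\partial(\alpha)+\alpha\star\alpha=0$. Because a derivation of a free operad is determined by its restriction to generators, it suffices to test the identity on $\asusp\overline{\CC}$: there the $d_1$-part (internal differential) together with $d_\CP$ contributes $\partial(\alpha)$, while the $d_2$-part (built from the infinitesimal decomposition $\triangle_{(1)}$) contributes precisely $\alpha\star\alpha$ once one uses that the morphism is compatible with $\gamma_{(1)}$ — modulo the Koszul signs produced by commuting the (de)suspension past operadic elements.

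Dually, for ${\rm Tw}(\CC,\CP)\cong\Hom_{\mathsf{cooperads^{\rm coaug}}}(\CC,\Bar\CP)$ I would invoke the universal property of the cofree conilpotent cooperad: since $\Bar\CP=(\CF^c(\susp\overline{\CP}),d_1+d_2)$ is quasi-cofree and $\CC$ is \emph{conilpotent}, a morphism of graded coaugmented cooperads $\CC\to\CF^c(\susp\overline{\CP})$ is the same datum as an $\S$-module map $\CC\to\susp\overline{\CP}$ of degree $0$, that is, a degree $-1$ map $\alpha\colon\CC\to\CP$ landing in $\overline{\CP}$. Conilpotence of $\CC$ is exactly the hypothesis that makes this universal property available — without it the corestriction-to-cogenerators map need not extend to a morphism of cooperads. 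Compatibility with the differentials is again tested on cogenerators, a coderivation into a cofree cooperad being determined by its projection to the cogenerators, and it unwinds to the very same Maurer--Cartan equation: the $d_1$-part yields $\partial(\alpha)$ and the $d_2$-part (coming from $\gamma_{(1)}$) yields $\alpha\star\alpha$.

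Finally, naturality of both bijections — contravariant in $\CC$, covariant in $\CP$ — is checked by inspection, since all identifications were performed on the level of (co)generators, where precomposition and postcomposition with operad and cooperad morphisms visibly correspond to the evident operations on twisting morphisms. Composing the two natural isomorphisms gives $\Hom_{\mathsf{operads^{\rm aug}}}(\Omega\CC,\CP)\cong\Hom_{\mathsf{cooperads^{\rm coaug}}}(\CC,\Bar\CP)$ naturally in both variables, which is the adjunction $\Omega\dashv\Bar$. I expect the only genuinely delicate step to be the consistent bookkeeping of the Koszul signs introduced by $\susp$ and $\asusp$ on the two sides, so that both unwindings produce literally the same equation $\partial(\alpha)+\alpha\star\alpha=0$ rather than sign-twisted variants.
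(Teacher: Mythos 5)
The paper offers no proof of this proposition: it is quoted verbatim from \cite[Theorem 6.5.7]{Loday:2012}, and the standard argument there is exactly the one you outline --- the universal properties of the free operad $\CF$ and of the cofree conilpotent cooperad $\CF^c$ reduce each $\Hom$-set to $\S$-module maps on (co)generators, and compatibility with $d_1+d_2$ unwinds to the Maurer--Cartan equation $\partial(\alpha)+\alpha\star\alpha=0$. Your sketch is correct, including the key observation that conilpotence of $\CC$ is what makes the cofree universal property applicable on the bar side; the only point worth making explicit on the cobar side is that compatibility with augmentations forces the generator map $\asusp\overline{\CC}\to\CP$ to land in $\overline{\CP}$, mirroring what you already note for $\Bar\CP$.
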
 
  
\noindent In the model structure on the category of operads (see \cite[Erratum Thm 3.3]{Hinich}) where weak-equivalence are componentwise quasi-isomorphisms and fibrations are componentwise epimorphisms, the counit of the adjunction provides a functorial cofibrant resolution $\Omega \Bar\CP\to\CP$ for any operad $\CP$. More generally, a twisting morphism $\alpha : \CC \to \CP$ which induced a quasi-isomorphism $\Omega\CC\overset{\sim}{\to} \CP$ is called a \emph{Koszul morphism}.

\begin{center}
  \emph{In the rest of this section, we fix $\alpha : \CC \to \CP$ a Kozsul morphism.}
\end{center}

\subsection*{Algebras over an operad} Operads are universal objects which encode algebraic structures. For an operad $\CP$, a $\CP$-algebra $A$ is a dg vector space equiped with a morphism of operads $\mu_A : \CP \to \End_A$, where for $n\in\N$, $\End_A(n) = \Hom(A^{\otimes n}, A)$. Such structural morphism is equivalent to a morphism of dg vector spaces $\CP(A) \to A$. More generally, for any Kozsul morphism $\alpha : \CC \to \CP$, we will call $\Omega\CC$-algebras {\it $\CP_\infty$-algebras} or {\it homotopy $\CP$-algebras}. 

\begin{prop}[The Rosetta Stone {\cite[Theorem 10.1.13]{Loday:2012}}]  \label{prop_rosetta_stone}
The set of $\CP_\infty$-algebra structures on a dg vector space $A$ is equivalently given by
\[
\Hom_{\mathsf{operads}}(\Omega\CC,\End_A)\cong{\rm Tw}(\CC,\End_A)\cong \Hom_{\mathsf{cooperads}}(\CC,\Bar\End_A) \cong \mathrm{Codiff}(\CC(A)),
\] 
where $\mathrm{Codiff}(\CC(A))$ is the set of codifferentials, i.e. the set of degree $-1$ square-zero coderivation on the $\CC$-coalgebra $\CC(A)$.
\end{prop}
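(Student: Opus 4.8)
The plan is to read the first object as the \emph{definition} of a $\CP_\infty$-algebra structure on $A$ and then to build the three displayed bijections one after another. The pair of isomorphisms
\[
\Hom_{\mathsf{operads}}(\Omega\CC,\End_A)\;\cong\;{\rm Tw}(\CC,\End_A)\;\cong\;\Hom_{\mathsf{cooperads}}(\CC,\Bar\End_A)
\]
is simply the Rosetta stone for operads recalled in the preliminaries, specialised to $\CP=\End_A$: a morphism out of the quasi-free operad $\Omega\CC$ is determined by the images of its generators, and the condition that this assignment commute with the two pieces $d_1+d_2$ of the cobar differential is exactly the Maurer--Cartan equation defining a twisting morphism, while the passage to $\Hom_{\mathsf{cooperads}}(\CC,\Bar\End_A)$ uses that $\CC$ is conilpotent and coaugmented. (The only small care needed is that $\End_A$ is handled with its canonical unital structure when forming $\Bar\End_A$; nothing substantial changes.) So I would spend no effort there and concentrate on the last isomorphism ${\rm Tw}(\CC,\End_A)\cong\mathrm{Codiff}(\CC(A))$.

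For that, recall first the universal property of the cofree conilpotent $\CC$-coalgebra $\CC(A)$: writing $\pi_A\colon\CC(A)\to A$ for the canonical projection onto the cogenerators, every coderivation $D\colon\CC(A)\to\CC(A)$ is uniquely determined by its corestriction $\pi_A\,D$, and conversely every degree $-1$ map $\CC(A)\to A$ lifts uniquely to a degree $-1$ coderivation of $\CC(A)$ (see \cite{Loday:2012}). Combining this with the tautological adjunction
\[
\Hom_{\mathbb S}\bigl(\CC(A),A\bigr)=\bigoplus_{n}\Hom_{\mathbb S_n}\bigl(\CC(n)\otimes A^{\otimes n},A\bigr)\cong\bigoplus_{n}\Hom_{\mathbb S_n}\bigl(\CC(n),\Hom(A^{\otimes n},A)\bigr)=\Hom_{\mathbb S}(\CC,\End_A),
\]
one gets a bijection between degree $-1$ coderivations of $\CC(A)$ and degree $-1$ elements $\varphi\in\Hom_{\mathbb S}(\CC,\End_A)$. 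Writing $d$ for the canonical coderivation induced by the internal differentials of $\CC$ and $A$, and $d_\varphi$ for the coderivation attached to $\varphi$, a codifferential is an element of the shape $d+d_\varphi$ with $(d+d_\varphi)^2=0$.

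The heart of the argument is then the identity
\[
\pi_A\,(d+d_\varphi)^2=\partial(\varphi)+\varphi\star\varphi,
\]
in which $\partial$ is the differential of the convolution pre-Lie algebra $\Hom_{\mathbb S}(\CC,\End_A)$ and $\star$ the convolution product built from $\triangle_{(1)}$ and $\gamma_{(1)}$ as in the preliminaries. Granting it: $(d+d_\varphi)^2$ is again a coderivation, hence vanishes as soon as its corestriction does, so $d+d_\varphi$ is a codifferential $\iff\partial(\varphi)+\varphi\star\varphi=0\iff\varphi$ is a twisting morphism --- which is the desired bijection $\mathrm{Codiff}(\CC(A))\cong{\rm Tw}(\CC,\End_A)$. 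To prove the identity I would unwind $d_\varphi$: on the summand indexed by $n$ it uses the decomposition $\triangle$ of $\CC$ to single out one sub-operation, applies $\varphi$ there, and lets the complementary part of $\CC$ and the remaining inputs ride along. Composing two such coderivations and then corestricting along $\pi_A$ kills every term except those in which the two applications of $\varphi$ are nested immediately one inside the other; the surviving piece of the iterated comultiplication is exactly $\triangle_{(1)}\colon\CC\to\CC\circ_{(1)}\CC$ and the two copies of $\varphi$ are glued by the infinitesimal composition $\gamma_{(1)}$ of $\End_A$, i.e.\ one obtains $\varphi\star\varphi$; the cross terms $d\,d_\varphi+d_\varphi\,d$ produce $\partial(\varphi)$, and $d^2=0$. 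Naturality of this chain of identifications in $A$ is clear, so the bijections assemble into the natural isomorphism claimed.

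I expect the principal obstacle to be the bookkeeping of signs and degrees in that last identity: one must reconcile the homological convention, the Koszul signs carried by the suspension $\susp$ that is built into both $\Bar\End_A$ and the definition of ${\rm Tw}(\CC,\End_A)$, and the sign conventions for coderivations, for $\star$, and for the Maurer--Cartan operator, so that $d+d_\varphi$ corresponds to $\varphi$ on the nose and not merely up to a sign. A second, more structural point is to justify that after corestricting only the weight-one part $\triangle_{(1)}$ of the iterated comultiplication survives; this is precisely where conilpotence of $\CC$ is used --- both to make the coderivation $d_\varphi$ (whose components form an \emph{a priori} infinite sum) well defined, and to guarantee that $\CC(A)$ really is the cofree conilpotent $\CC$-coalgebra, so that the universal property of coderivations on which the whole argument rests is available. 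Everything else is a formal assembly of adjunctions.
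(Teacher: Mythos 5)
Your proposal is correct and follows essentially the argument the paper relies on: the paper gives no proof of its own but cites \cite[Theorem 10.1.13]{Loday:2012}, whose proof is exactly your route --- specialise the operadic bar--cobar Rosetta stone to $\End_A$ for the first two bijections, then use the universal property of the cofree conilpotent $\CC$-coalgebra to identify degree $-1$ coderivations of $\CC(A)$ with elements of $\Hom_{\mathbb S}(\CC,\End_A)$ and check that the square-zero condition corestricts to the Maurer--Cartan equation. The two points you flag (sign bookkeeping, and conilpotence of $\CC$ being what makes the coderivation lift well defined and reduces the corestricted square to the $\triangle_{(1)}$ term) are indeed where the only real work lies, so nothing is missing.
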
 

\noindent One can associate to any twisting morphism $\alpha:\CC\to\CP$ two functors 
\[
\Bar_\alpha:\CP\myhyphen{\sf Alg} \to \CC\myhyphen{\sf Coalg}^{\rm conilp.},\,{\rm and}\,\,
\Omega_\alpha:\,\CC\myhyphen{\sf Coalg}^{\rm conilp.}\to\CP\myhyphen{\sf Alg},
\]
see \cite[Sect. 11.2.2 and 11.2.5]{Loday:2012}. By definition, the {\it bar construction with respect to $\alpha$},  denoted $\Bar_\alpha A=\CC(A)$, is the cofree conilpotent $\CC$-coalgebra on the $\scrP$-algebra $A$ with a square zero coderivation $d_1+d_2$, is obtained from the differential of $\CC$ and the differential of $A$ (corresponding to $d_1$) and the morphism $\alpha$ (corresponding to $d_2$). The {\it cobar construction with respect to $\alpha$}, denoted $\Omega_\alpha(C)$, is defined similarly on $\CP(C)$, the free $\CP$-algebra on the vector space $C$.


\subsection*{$\infty$-morphisms between operadic algebras} Between two $\Omega\CC$-algebras, one define a nicer notion of morphisms called $\infty$-morphisms. A $\infty$-morphism $\varphi$ between two $\Omega\CC$-algebras $(A, \mu_A : \CC \to \End_A)$ and $(B, \mu_B : \CC \to \End_B)$ is a map of $\mathbb{S}$-modules $\phi : \CC \to \End^A_B$, where $\End^A_B(n) = \Hom(A^{\otimes n}, B)$ satisfying the equation 
\[
  \partial \varphi = \varphi \rhd \mu_A - \mu_B \lhd \varphi
\]
where we have
\[
  \begin{aligned}
  \varphi \rhd \mu_A & : \CC \xrightarrow[]{\triangle_{(1)}} \CC \circ_{(1)} \CC \xrightarrow[]{\varphi \circ \mu_A} \End_B^A \circ \End_A \xrightarrow[]{\mathrm{comp.}} \End_B^A, \\
  \mbox{ and } \mu_B \lhd \varphi & : \CC \xrightarrow[]{\triangle} \CC \circ \CC \xrightarrow[]{\mu_B \circ \varphi} \End_B \circ \End_B^A \xrightarrow[]{\mathrm{comp.}} \End_B^A \\
  \end{aligned}
\]
By interpreting $\CP_\infty$-structure on $A$ as a codifferential on $\CC(A)$ (cf. \Cref{prop_rosetta_stone}), an $\infty$-morphism $A \leadsto B $ corresponds to a strict morphism of $\CC(A) \to \CC(B)$ of codifferential $\CC$-coalgebras. 

Notice that the restriction of an $\infty$-morphism $\varphi : A \leadsto B$ to the part $A \cong \mathcal{I}(A) \subset \CC(A)$ is a chain map : 
\[
  \begin{tikzcd}
    \mathcal{I}(A) \arrow[r] \arrow[d, "\varphi_0"'] & \CC(A) \arrow[d, "\varphi"] \\
    \mathcal{I}(B) \arrow[r] & \CC(B) \\
  \end{tikzcd}  
\]
We say that $\varphi$ is a $\infty$-quasi-isomorphism if the component $\phi_0 : A \to B$ is a quasi-isomorphism of chain complexes. For more details, see \cite[Sect. 10.2]{Loday:2012}.

\subsection*{Homotopy Transfer Theorem} Let $(A,d_A)$ be a chain complex which admits a {\it contraction} onto another chain complex $(H,d_H)$, i.e. there exist chain maps $i$ and $p$ and a homotopy $h$ of degree $1$ which satisfy
\[
p\,i=\id_H\quad i\,p-\id_A=d_A\,h+h\,d_A.
\] 
In this situation, there is a Homotopy Transfer Theorem (HTT), see  \cite[Sect. 10.3]{Loday:2012}. The HTT transfers a $\CP_\infty$-structure on $A$ to a deformation retract $H$. 

\begin{thm}[Homotopy Transfer Theorem] \label{HTT}
Given an $\Omega\CC$-algebra structure $\alpha\in{\rm Tw}(\CC,\End_A)$ on $A$, then there is an $\Omega\CC$-algebra structure on $H$ and two $\infty$-quasi-isomorphisms $i_\infty$ and $p_\infty$ extending 
the maps $i:H\to A$ and $p:A\to H$. 
\end{thm}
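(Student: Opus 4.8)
The plan is to obtain the transferred structure via the \emph{homological perturbation lemma} (HPL), applied on the coalgebra side of the Rosetta Stone (\Cref{prop_rosetta_stone}). First I would lift the given contraction $(i,p,h)$ of $(A,d_A)$ onto $(H,d_H)$ — after the standard modification of $h$ ensuring the side conditions $h^2=0$, $h\,i=0$, $p\,h=0$ — to a contraction of $(\CC(A),\partial_1)$ onto $(\CC(H),\partial_1^H)$ by applying the analytic endofunctor $\CC(-)$ together with the tensor trick: here $\partial_1$ is the coderivation of the cofree conilpotent $\CC$-coalgebra coming only from $d_\CC$ and $d_A$, the maps are $I=\CC(i)$ and $P=\CC(p)$, and the induced homotopy $\mathrm H$ is built by inserting $h$ in one tensor slot and $i\,p$, $\id$ in the others, again arranged to satisfy the side conditions. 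Observe that $I$ and $P$ are morphisms of graded $\CC$-coalgebras.

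Next I would feed in the perturbation. By the Rosetta Stone the $\Omega\CC$-structure $\alpha\in{\rm Tw}(\CC,\End_A)$ is the same datum as a codifferential $\partial_1+\partial_2$ on $\CC(A)=\Bar_\alpha A$, with $\partial_2$ the coderivation generated by $\alpha$. Conilpotency of $\CC$ makes $\partial_2$ locally nilpotent, so it is a genuine perturbation of $\partial_1$, and the HPL produces a differential $\partial_1^H+\partial_2^\infty$ on $\CC(H)$ together with perturbed maps $I_\infty=I+\mathrm H\partial_2 I+\cdots$ and $P_\infty=P+P\partial_2\mathrm H+\cdots$, where $\partial_2^\infty=P\partial_2 I+P\partial_2\mathrm H\partial_2 I+\cdots$; conilpotency also guarantees that these series converge, each term being a finite sum over decorated trees.

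Then one reads off the conclusion. The HPL identity $(\partial_1^H+\partial_2^\infty)^2=0$ makes $\partial_2^\infty$ a codifferential on $\CC(H)$, hence by the Rosetta Stone an $\Omega\CC$-algebra structure on $H$; and $I_\infty:\CC(H)\to\CC(A)$, $P_\infty:\CC(A)\to\CC(H)$, being morphisms of codifferential $\CC$-coalgebras, are precisely $\infty$-morphisms $i_\infty:H\leadsto A$ and $p_\infty:A\leadsto H$. Restricting $I_\infty$ to $\mathcal I(H)\subset\CC(H)$ shows that all the correction terms raise weight, so the linear component $(i_\infty)_0$ equals $i$, and likewise $(p_\infty)_0=p$; finally, $p\,i=\id_H$ and $i\,p-\id_A=d_A h+h d_A$ force $i$ and $p$ to be quasi-isomorphisms, so $i_\infty$ and $p_\infty$ are $\infty$-quasi-isomorphisms extending $i$ and $p$.

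The hard part is the claim used above that the HPL output stays \emph{internal} to codifferential $\CC$-coalgebras and $\infty$-morphisms rather than merely to chain complexes. This is where conilpotency of $\CC$ is essential — both to make $\partial_2$ a perturbation and to ensure convergence — and where one must check carefully that the induced homotopy $\mathrm H$ interacts with the cofree coproduct so that each composite occurring in the perturbation series is again a coderivation, equivalently that $\partial_2^\infty$ is a coderivation and $I_\infty,P_\infty$ are coalgebra maps. An alternative route that sidesteps this bookkeeping is to define the transferred operations directly by the explicit formula summing over rooted trees with vertices decorated by $\alpha$, internal edges by $h$, leaves by $i$ and root by $p$, and then to verify the $\Omega\CC$-relations by hand, as in \cite[Sect. 10.3]{Loday:2012}.
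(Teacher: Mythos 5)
Your argument is correct, but it takes a genuinely different route from the one the paper relies on: Theorem \ref{HTT} is quoted from \cite[Sect.~10.3]{Loday:2012}, where the transferred structure is \emph{defined} by the explicit formula summing over trees with vertices decorated by $\alpha$, internal edges by $h$, leaves by $i$ and root by $p$, and the $\Omega\CC$-relations are verified by exhibiting a morphism of (co)operads $\Bar\End_A\to\Bar\End_H$ --- essentially the fallback you mention in your last sentence, and the description the paper invokes again right after the statement. Your main route instead lifts the contraction to $\CC(A)=\Bar_\alpha A$ by the tensor trick and perturbs $\partial_1$ by the coderivation $\partial_2$ encoding $\alpha$ via the homological perturbation lemma. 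Each approach buys something: the HPL produces the transferred codifferential, the $\infty$-morphisms $I_\infty$, $P_\infty$ and even a transferred homotopy as universal convergent series (convergence following from the weight filtration and connectedness of $\CC$, as you say), whereas the operadic tree formula of \cite{Loday:2012} sidesteps the one genuinely delicate point you correctly flag: the naive perturbation lemma only outputs chain-complex data, and one must choose the lifted homotopy $\mathrm H$ (satisfying the side conditions and suitably symmetrized over the tensor slots, which uses characteristic $0$) so that $\partial_2^\infty$ is again a coderivation and $I_\infty$, $P_\infty$ are coalgebra morphisms. That compatibility is the actual content of the tensor trick for cofree conilpotent $\CC$-coalgebras, not a formality; since you identify it as the crux and offer the tree-formula verification as an alternative, your outline is sound.
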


The $\infty$-morphisms $i_\infty$ and $p_\infty$ have an explicit combinatorial description in terms of trees labeled by $i$, $p$ and $h$ (see \cite[Sect. 10.3]{Loday:2012} or \cite{HLV}), 

\subsection*{Rectification theorem} The rectification of a $\CP_\infty$-structure into a $\CP$-structure is given by the Rectification Theorem, see \cite[Theorem 11.4.4]{Loday:2012}.
We state it here in the general framework of an operad $\CP$  
such that there exists a coaugmented conilpotent cooperad $\CC$ 
and a twisting morphism $\alpha:\CC\to \CP$ inducing a quasi-isomorphism $\Omega\CC\to\CP$ (see Proposition \ref{prop_rosetta_stone}). These hypotheses are fulfilled, for example, for $\CC=\Bar\CP$, but also for a Koszul operad $\CP$ and $\CC=\CP^{\initial}$. Note that in terms of the notation used in \cite{Loday:2012}, we have in our situation $\kappa=\alpha:\CC\to \CP$ and $\iota:\CC\to\Omega\CC$ the universal twisting morphism (cf \cite[Sect. 2.2.4]{Loday:2012}). 

\begin{thm}[Rectification Theorem] \label{RT}
In the above stated situation, any homotopy $\CP$-algebra $A$ is naturally $\infty$-quasi-isomorphic to the $\CP$-algebra $\Omega_\alpha \Bar_\iota A$. 
\end{thm}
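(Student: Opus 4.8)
The plan is to factor the Koszul morphism $\alpha$ through the universal twisting morphism $\iota:\CC\to\Omega\CC$, to recognise $\Omega_\alpha\Bar_\iota A$ as an extension of scalars of the bar--cobar resolution of $A$, and to finish with the homotopy theory of $\infty$-morphisms. Concretely: by the Rosetta stone (cf.\ \cref{prop_rosetta_stone}) the twisting morphism $\alpha$ corresponds to a morphism of operads $g_\alpha:\Omega\CC\to\CP$ with $\alpha=g_\alpha\,\iota$, and the hypothesis on $\alpha$ says exactly that $g_\alpha$ is a quasi-isomorphism. Extension of scalars along $g_\alpha$ gives an adjunction $(g_\alpha)_!\dashv(g_\alpha)^{*}$ between $\Omega\CC\myhyphen{\sf Alg}$ and $\CP\myhyphen{\sf Alg}$ with $(g_\alpha)_!$ sending the free $\Omega\CC$-algebra on $V$ to the free $\CP$-algebra on $V$. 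Comparing twisted differentials on generators, I would first establish the natural isomorphism of $\CP$-algebras
\[
  \Omega_\alpha(C)\;\cong\;(g_\alpha)_!\,\Omega_\iota(C),
\]
valid for every conilpotent $\CC$-coalgebra $C$ (both sides are $\CP(C)$, and the twisting term of the differential of $\Omega_\iota(C)$, built from $\iota$, is carried by $g_\alpha$ to the one built from $g_\alpha\,\iota=\alpha$, i.e.\ to the twisting term of $\Omega_\alpha(C)$); taking $C=\Bar_\iota A$ yields $\Omega_\alpha\Bar_\iota A\cong(g_\alpha)_!\,\Omega_\iota\Bar_\iota A$.

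Next I would exhibit two natural quasi-isomorphisms of $\Omega\CC$-algebras. Since the morphism of operads induced by $\iota$ is $\id_{\Omega\CC}$, the twisting morphism $\iota$ is Koszul, so the counit of the bar--cobar adjunction with respect to $\iota$,
\[
  \varepsilon_A:\;\Omega_\iota\Bar_\iota A\;\too\;A,
\]
is a strict quasi-isomorphism for every $\Omega\CC$-algebra $A$ (bar--cobar resolution, see \cite[Sect.~11.3]{Loday:2012}). For the second one I would show that the unit of $(g_\alpha)_!\dashv(g_\alpha)^{*}$ at $\Omega_\iota\Bar_\iota A$,
\[
  \eta_A:\;\Omega_\iota\Bar_\iota A\;\too\;(g_\alpha)^{*}(g_\alpha)_!\,\Omega_\iota\Bar_\iota A\;=\;(g_\alpha)^{*}\,\Omega_\alpha\Bar_\iota A,
\]
is a quasi-isomorphism. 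The point is that $\Omega_\iota\Bar_\iota A=\bigl(\Omega\CC(\Bar_\iota A),\,d\bigr)$ is quasi-free and $\eta_A$ is, on underlying $\bbS$-modules, the map induced by $g_\alpha$ applied to $\Bar_\iota A$; filtering both sides by the number of generators --- using the conilpotent filtration of the $\CC$-coalgebra $\Bar_\iota A$ --- annihilates the twisting parts of the differentials, and on the associated graded $\eta_A$ becomes $\bigoplus_n\bigl(g_\alpha(n)\otimes_{\bbS_n}\id\bigr)$, a quasi-isomorphism because $g_\alpha$ is a componentwise quasi-isomorphism and $\field$ has characteristic zero (so $\bbS_n$-coinvariants and $-\otimes(\Bar_\iota A)^{\otimes n}$ preserve quasi-isomorphisms). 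Since the filtration is exhaustive and bounded below, comparing the two spectral sequences gives the claim; equivalently, $g_\alpha$ being a weak equivalence of operads, $(g_\alpha)_!\dashv(g_\alpha)^{*}$ is a Quillen equivalence for the model structure of \cite{Hinich} and its unit is a weak equivalence on the cofibrant object $\Omega_\iota\Bar_\iota A$.

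Assembling these, I obtain a zig-zag of quasi-isomorphisms of $\Omega\CC$-algebras, natural in $A$,
\[
  A\;\xleftarrow{\varepsilon_A}\;\Omega_\iota\Bar_\iota A\;\xrightarrow{\eta_A}\;(g_\alpha)^{*}\,\Omega_\alpha\Bar_\iota A,
\]
in which both legs are $\infty$-quasi-isomorphisms (a strict quasi-isomorphism being a particular $\infty$-quasi-isomorphism, with the same linear part). Finally I would invoke the fundamental theorem of $\infty$-morphisms \cite[Theorem 10.4.4]{Loday:2012} to produce an $\infty$-quasi-isomorphism $A\leadsto\Omega_\iota\Bar_\iota A$, and compose it with $\eta_A$ to obtain the desired natural $\infty$-quasi-isomorphism $A\leadsto\Omega_\alpha\Bar_\iota A$. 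I expect the main obstacle to be the second quasi-isomorphism above: it is the only place where the Koszul hypothesis is genuinely used, and the spectral-sequence comparison relies on the conilpotency of $\CC$ for convergence; making the final $\infty$-quasi-isomorphism strictly natural (rather than natural only as a span) requires a functorial choice of homotopy inverse for $\varepsilon_A$. Everything else is a formal manipulation of the bar--cobar adjunction, of base change along $g_\alpha$, and of the homotopy category of $\Omega\CC$-algebras with $\infty$-morphisms.
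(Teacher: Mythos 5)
Your proposal is correct and follows essentially the route the paper itself takes, since the paper's proof consists precisely of adapting \cite[Theorem 11.4.4]{Loday:2012}: factor $\alpha=g_\alpha\,\iota$, use that $\varepsilon_A:\Omega_\iota\Bar_\iota A\to A$ is a quasi-isomorphism, and show by the filtration/K\"unneth argument that the base-change map $\Omega_\iota\Bar_\iota A\to\Omega_\alpha\Bar_\iota A$ induced by the quasi-isomorphism $g_\alpha$ is a quasi-isomorphism. The only refinement worth noting is that the naturality issue you flag at the end disappears if, instead of inverting $\varepsilon_A$, you take as the $\infty$-morphism $A\leadsto\Omega_\iota\Bar_\iota A$ the unit $\Bar_\iota A\to\Bar_\iota\Omega_\iota\Bar_\iota A$ of the coalgebra-level bar--cobar adjunction, whose linear part is the inclusion of generators $A\hookrightarrow\Omega_\iota\Bar_\iota A$, a quasi-isomorphism because it is a section of $\varepsilon_A$.
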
  

\begin{proof} 
Can be adapted from the proof of \cite[Theorem 11.4.4]{Loday:2012}.
\end{proof} 

\subsection*{Operadic cohomology}
For an operad $\CP$  such that there exists a coaugmented conilpotent cooperad $\CC$ and a twisting morphism $\alpha:\CC\to \CP$ inducing a quasi-isomorphism $\Omega\CC\to\CP$, let us recall the {\it operadic cohomology} of a $\CP$-algebra $A$ with values in an $A$-module $(M,\gamma_M)$, see \cite[Sect. 12.4]{Loday:2012}. It is the homology of the chain complex underlying the bar construction of $A$ with respect to $\alpha$:
\[
\rmC^\bullet_\CP(A,M)\coloneqq (\Hom(\Bar_\alpha A,M),\partial_{\pi_\alpha})=(\Hom(\CC(A),M),\partial_{\pi_\alpha}).
\] 
Denoting $d$ the differential on the bar construction $\Bar_\alpha A$ which is given by the sum of a differential $d_1$ incorporating the differential of $A$ and the differential of $\CC$ and a differential $d_2$ induced from the twisting morphism $\alpha$, the differential $\partial_{\pi_\alpha}$ is given by 
\begin{equation}  \label{definition_differential}
\partial_{\pi_\alpha}(g)\coloneqq \bar{\partial}(g)-(-1)^{|g|}g\,d,
\end{equation}
where $g\in\Hom(\CC(A),M)$ and where $\bar{\partial}$ is the composition
\[
  \CC\xrightarrow{\triangle}
  \CC\circ\CC(A)\xrightarrow{\alpha\circ({\rm proj};g)}
  \CP\circ(A;M)\xrightarrow{\gamma_M} M. 
\]
Here ${\rm proj}:\CC(A)\to A$ denotes the canonical projection. As mentioned in \cite[Sect.12.4]{Loday:2012}, this operadic cohomology gives back the usual cohomology theories of Hochschild cohomology for the associative operad $\As$ and Chevalley--Eilenberg cohomology for the Lie operad $\Lie$ up to a degree shift. 
As usual, we denote by $\rmH_\CP^\bullet(A;M)$ the cohomology groups of the $\CP$-algebra $A$ with values in the $A$-module $M$, and by  $\rmZ_\CP^n(A;M)\coloneqq \ker\left(\partial_{\pi_\alpha}\colon \rmC_\CP^n(A;M)\to \rmC_\CP^{n+1}(A;M)\right)$ the space of $n$-cocycles.

 
\section{Crossed modules of algebras over an operad}

\begin{Hypotheses}\label{HypothesesOperads}
  In this paper, we suppose that 
  \begin{itemize}
    \item $\CP$ is a augmented operad concentrated in degree $0$;
    \item $\CC$ is a connected coaugmented conilpotent cooperad concentrated in positive degrees with a twisting morphism $\alpha : \CC \to \CP$ such that the induced morphism of operads $\Omega\CC \xrightarrow[]{\sim} \CP$ is a cofibrant resolution of $\CP$: such twisting morphisms could be produced by the Bar-Cobar resolution or by Koszul duality for example;
    \item $A$ is a $\CP$-algebra concentrated in degree $0$;
    \item $M$ is a $A$-module concentrated in degree $0$.
  \end{itemize}
\end{Hypotheses}

We denote by $A\ltimes M$ the corresponding $\CP$-algebra structure on the direct sum $A\oplus M$, see \cite[Sect. 12.3.2]{Loday:2012}. 

\begin{definition}[$n$-crossed module]\label{def:ncrossedmodule}
  Let $A$ be a $\CP$-algebra, let $M$ be an $A$-module and $n\geqslant 1$ be an integer. An \emph{$n$-crossed module $B$ over the pair $(A,M)$} is a 
 map of dg $\CP$-algebras $\pi\colon B \to A$ such that 
  \begin{itemize}
    \item $B=(0\to B_n\xrightarrow{\partial_n}\cdots\xrightarrow{\partial_1} B_0\to 0)$ 
    is concentrated in degrees $0,\ldots,n$;
    \item there exists a chain map $\varphi \colon B \dashrightarrow A \ltimes M[n]$ which is a quasi-isomorphism such that the following diagram of chain maps commutes:
    \[\begin{tikzcd}
      B 
      \arrow[rr, dashed, "\simeq", "\varphi"'] 
      \arrow[rd, "\pi"'] 
      & & A \ltimes M[n] \arrow[ld, "\mathrm{pr}_A"] \\
      & A &
    \end{tikzcd} ;\]
    \item the chain map $\Ker\left(\pi \right) \to M$ induced by $\varphi$ is a morphism of $B$-modules (where $M$ is a $B$-module via $\pi$).
  \end{itemize}
 \end{definition}

\begin{rem}\label{rem:descriptionCrossedMod}
The simplest definition of an {\it $n$-crossed module} is just a $\CP$-algebra structure on a complex  
$B=(0\to B_n\xrightarrow{\partial_n}\cdots\xrightarrow{\partial_1} B_0\to 0)$ 
which is concentrated in degrees $0,\ldots,n$, with non trivial homology groups only in degrees $0$ and $n$. 
The extra data in the above definition serves to express the notion of an $n$-crossed module {\it with fixed kernel $M$ and fixed cokernel $A$}. In other words, it is an $n$-crossed modules {\it over the pair (A,M)}. 
\end{rem}

\begin{notation}
Let us denote by $\crmod^n_\CP(A,M)$ the set of $n$-crossed modules over $(A,M)$ (in a fixed universe). 
\end{notation}

\begin{rem} 
  Using the description of crossed modules given in Remark \ref{rem:descriptionCrossedMod}, we have directly that any morphism of operads $\calQ \to \CP$ induces, for all pairs $(A,M)$, a map 
  \[\crmod^n_\CP(A,M) \to \crmod^n_\calQ(A,M).\]
\end{rem}

\begin{definition}[Equivalence of $n$-crossed modules]  \label{definition_equivalence}
Two $n$-crossed modules $B$ and $B'$ over the pair $(A,M)$ are called \emph{elementarily equivalent}
if there exists a quasi-isomorphism of $\CP$-algebras $\psi:B\to B'$ such that $\varphi'\,\psi=\varphi$. The \emph{equivalence of crossed modules} is the equivalence relation generated by elementary equivalence.   
\end{definition}

\begin{rem}
Note that elementary equivalence implies in particular that $\pi'\,\psi=\pi$ as morphisms of $\CP$-algebras and $\varphi'\,\psi=\varphi$ as morphisms of $B$-modules (where $\ker(\pi')$ is a $B$-module via $\psi$). 
\end{rem}

\begin{prop}  \label{proposition_crmod}
Suppose that $n\geqslant 1$. 
The data of an $n$-crossed module over the pair $(A,M)$ is equivalent to the data of a $\CP$-algebra structure on a complex  
$B=(0\to B_n\xrightarrow{\partial_n}\cdots\xrightarrow{\partial_1} B_0\to 0)$ 
together with a quasi-isomorphism $\phi:B\to A\ltimes M[n]$ such that $\rmH_0(\phi):(B_0/\partial_1 B_1)\to A$ is an isomorphism of $\CP$-algebras and $\varphi |_{\Ker\left(\pi \right)}$ is a morphism of $B$-modules. 
Furthermore $\varphi$ induces an isomorphism of $A$-modules $\varphi:B_n\to M$. 
\end{prop}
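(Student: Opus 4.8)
The plan is to unpack Definition \ref{def:ncrossedmodule} and show the three bulleted conditions there are equivalent to the conditions stated in the proposition, then establish the last sentence as a consequence. First I would observe that the only genuine difference between the two formulations lies in how the ``cokernel equals $A$'' condition is phrased: Definition \ref{def:ncrossedmodule} packages it as a commuting triangle $\mathrm{pr}_A\,\varphi=\pi$ with $\pi\colon B\to A$ a given $\CP$-algebra map, whereas the proposition asks instead that the induced map on $0$-th homology $\rmH_0(\varphi)\colon B_0/\partial_1 B_1\to A$ be an isomorphism of $\CP$-algebras. So the core of the argument is: given a $\CP$-algebra structure on $B$ and a quasi-isomorphism $\varphi\colon B\to A\ltimes M[n]$ with $\rmH_0(\varphi)$ an isomorphism of $\CP$-algebras, reconstruct the map $\pi$. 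Since $B$ is concentrated in degrees $0,\dots,n$ and $n\geqslant 1$, the quotient $B_0/\partial_1 B_1=\rmH_0(B)$ is a $\CP$-algebra (the cokernel of $\partial_1$ in $\CP$-algebras, because the image of a $\CP$-algebra derivation-style differential is an ideal), and the composite $B\twoheadrightarrow \rmH_0(B)\xrightarrow{\rmH_0(\varphi)}A$ is the desired $\CP$-algebra map $\pi$; conversely, given $\pi$ from the definition, the triangle forces $\rmH_0(\varphi)=\rmH_0(\mathrm{pr}_A)^{-1}\rmH_0(\pi)$ to be an isomorphism because $\mathrm{pr}_A\colon A\ltimes M[n]\to A$ is a quasi-isomorphism (as $M[n]$ is concentrated in degree $n\geqslant 1$, so $\mathrm{pr}_A$ induces an iso on $\rmH_0$ and all homology vanishes in positive degrees except degree $n$). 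I would check that these two constructions are mutually inverse and that the ``$\Ker(\pi)\to M$ is a $B$-module map'' condition transports verbatim, since $\Ker(\pi)$ in the definition is exactly $\Ker(B\to\rmH_0(B)\to A)$ once $\pi$ is reconstructed.

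Next I would address the final assertion that $\varphi$ induces an isomorphism of $A$-modules $\varphi\colon B_n\to M$. Here I use that $B$ is concentrated in degrees $0,\dots,n$ with $n\geqslant 1$, so $B_n$ is genuinely the top term, $\partial_{n+1}=0$, hence $B_n=\Ker(\partial_n)\subseteq\Ker(\pi)$ in top degree and $\rmH_n(B)=\Ker(\partial_n)/\im(\partial_{n+1})=B_n$. Wait---more carefully: $B_n$ need not equal $\Ker\partial_n$, but $B_n$ is the degree-$n$ part of $\Ker(\pi)$ regardless since $\pi$ lands in $A$ concentrated in degree $0$. On the target side, $(A\ltimes M[n])_n=M[n]_n\cong M$. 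The chain map $\varphi$ restricted to degree $n$ gives a map $B_n\to M$, and I would show it is an isomorphism by a homology comparison: $\varphi$ is a quasi-isomorphism, and in the subcomplex $\Ker(\pi)\to \Ker(\mathrm{pr}_A)=M[n]$ the target has homology concentrated in degree $n$ equal to $M$, so $\rmH_n(\Ker\pi)\cong M$; combined with the crossed-module hypothesis that homology of $B$ is concentrated in degrees $0$ and $n$, one gets that at the top degree $\varphi$ restricts to an isomorphism $B_n\to M$. That this is $A$-linear (equivalently $B$-linear through $\pi$, and the $A$-action factors through $\rmH_0$) follows from the third bullet, that $\varphi|_{\Ker\pi}$ is a morphism of $B$-modules, together with the fact that $B_n$ is annihilated by $\partial_1 B_1$-type elements so the $B$-action on it descends to an $A=\rmH_0(B)$-action.

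I would organize the write-up as two short lemmas---(i) the reconstruction of $\pi$ from $\rmH_0(\varphi)$ and its inverse, (ii) the top-degree isomorphism---followed by a one-line assembly. The main obstacle I anticipate is purely bookkeeping: verifying that the cokernel $B_0/\partial_1 B_1$ really carries a natural $\CP$-algebra structure making $B\to B_0/\partial_1 B_1$ a $\CP$-algebra map, i.e.\ that $\partial_1 B_1$ is a $\CP$-ideal of $B_0$. This is where one must use that the differential on a dg $\CP$-algebra interacts with the operad action via the Leibniz rule, so that $\mu(b_1,\dots,b_k)$ with some $b_i\in\partial_1 B_1$ lies again in $\partial_1 B_1$ up to terms that vanish for degree reasons (all $B_j$ with $0<j<1$ are zero, and anything in degree $\geqslant 1$ cannot hit $B_0$ under the structure map composed with projection). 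None of the steps are deep, but getting the degree conventions and the homological-convention signs consistent with the \nameref{Conventions} section is the place where care is needed; everything else is a diagram chase in the category of dg $\CP$-algebras.
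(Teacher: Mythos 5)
Your proposal follows essentially the same route as the paper's proof: $\pi$ is recovered as the projection $B\to B_0/\partial_1 B_1$ followed by $\rmH_0(\varphi)$, the module statements are handled by degree arguments, and the descent of the $B_0$-action to an $A$-action in top degree rests on the Leibniz rule for the differential of the dg $\CP$-algebra, exactly as in the paper's displayed computation of $\mu(b_1,\ldots,\partial_1\tilde b,\ldots,b_k,m)$. The only caveat concerns the final clause: the quasi-isomorphism hypothesis yields an isomorphism $\ker(\partial_n)\xrightarrow{\cong}M$ rather than $B_n\xrightarrow{\cong}M$ (you flag this and then still assert the latter), but this imprecision is already present in the paper's own statement and its proof establishes no more than the $\ker(\partial_n)$ version.
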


\begin{proof}
It is clear that an $n$-crossed module induces the data stated in the proposition, because the 
chain map $\pi$ induces the $\CP$-algebra isomorphism $\rmH_0(\phi):(B_0/\partial_1 B_1)\to A$ and the isomorphism of $A$-modules $\Ker\left(\rmH_*(\pi)\right) \to M$  (where the $\rmH_0(B)$-module $\Ker\left(\rmH_*(\pi)\right)$ is viewed as an $A$-module via the isomorphism $\rmH_0(\phi):(B_0/\partial_1 B_1)\to A$). 

Conversely, 
 we get back the data of an $n$-crossed module with $\pi$ being the projection $B\to B_0\to(B_0/\partial_1 B_1)$ followed by the isomorphism $\rmH_0(\phi)$ (this is a chain map !). 
 
The map $\varphi$ has non zero components only in degree $0$ and $n$. Its degree-$n$-component $\varphi_n:B_n\to M[n]$ is a morphism of $B_0$-modules for degree reasons. 

Also for degree reasons, $\ker(\partial_n)$ is a $B_0$-module with respect to the $\CP$-algebra structure $\mu$. As $\mu$ is a chain map, the action of elements $b_1,\ldots,b_k \in B_0$ where at least one element is in $\partial_1 B_1$, is zero on $\ker(\partial_n)$ : suppose that $b_i = \partial_1 \tilde{b}$, and $m \in \ker(\partial_n)$, we have
\[
  \begin{aligned}
    \mu(b_1, \ldots, \partial_1 \tilde{b}, \ldots, b_k, m) & =  
    \partial_{n+1}\mu(b_1, \ldots, \tilde{b}, \ldots, b_k, m) \\
    & \qquad\pm \mu(\partial_0 b_1, \ldots,  \tilde{b}, \ldots, b_k, m) \pm \cdots  \\
    & \qquad \pm \mu(b_1, \ldots,  \tilde{b}, \ldots, \partial_0  b_k, m) \\ 
    & \qquad \pm \mu(b_1, \ldots,  \tilde{b}, \ldots,   b_k,\partial_n m)
  \end{aligned}
\]
where on the right hand side, all the terms are zero. Thus $\ker(\partial_n)$ becomes an $A\cong B_0/\partial_1 B_1$-module.   
\end{proof}

Let us now give some examples of $n$-crossed modules over an operad $\CP$ in the special case $n=1$. Forgetting the identification of the kernel and the cokernel, a crossed module of $\CP$-algebras is just a $\CP$-algebra structure on a length-1-chain complex $(V,\partial)\coloneqq (\partial:B\to C)$. We will establish in particular that this definition generalizes known definitions of crossed modules.  

\begin{ex}
Let us first consider $1$-crossed module of $\As$-algebras. Let $\partial:B\to C$ be a complex. The structure of an  $1$-crossed module of $\As$-algebras on $\partial:B\to C$ is by definition the structure of an associative algebra on $\partial:B\to C$ in the monoidal category of chain complexes. Let us assign to $B$ the degree $1$ and to $C$ the degree $0$. This means then that we have an associative product
\[
(B\to C)\otimes(B\to C)\to (B\to C).
\]
The (degree $0$-component of the) associative product $C\otimes C\to C$ renders $C$ an (ordinary)  associative algebra. This owes to the principle that thanks to the fact that $C\mapsto (0\to C)$ and $(B\to C)\mapsto C$ are monoidal functors, the degree $0$ component of any $\CP$-algebra in complexes acquires the structure of a $\CP$-algebra, cf \cite{LP}. 

The degree-$1$-component of the product $(C\otimes B+B\otimes C)\to B$ gives a $C$-bimodule structure on $B$ (thanks to the associativity of the product).  The commutativity of the diagram
\[
  \begin{tikzcd}
    C\otimes B+B\otimes C \ar[r] \ar[d] & B \ar[d] \\ C\otimes C \ar[r] & C 
  \end{tikzcd}
\]
implies that for all $b,b'\in B$ and all $c,c'\in C$, we have 
\[
c\cdot\partial(b) + \partial(b')\cdot c'= c\partial(b)+\partial(b')c'.
\]
This is the equivariance of the map $\partial:B\to C$. 
The commutativity of the square
\[
  \begin{tikzcd}
    B\otimes B \ar[r] \ar[d] & 0 \ar[d] \\ 
    C\otimes B+B\otimes C \ar[r] & B 
  \end{tikzcd}
\]
induces a relation called the \emph{Peiffer relation.} Indeed, the commutativity implies for all $b,b'\in B$ that
\[
\partial(b)\cdot b' - b\cdot\partial(b') =0,
\]
which gives $\partial(b)\cdot b'=b\cdot\partial(b')$, compare \cite[Definition 3.1]{Baues:2002}. 
Defining a product on $B$ via the prescription $bb'\coloneqq \partial(b)\cdot b'$ renders $B$ an associative algebra such that we obtain a crossed module of associative algebras $\beta:B\to C$ where, with respect to \cref{def:ncrossedmodule}, $A = \coker(\partial)$, $M = \ker(\partial)$ and $\phi$ is the projection.
\end{ex}

\begin{ex}
Let us now consider $1$-crossed modules of $\Lie$-algebras. Let $\partial:B\to C$ be a complex. Again, the structure of a  $1$-crossed module of $\Lie$-algebras on $\partial:B\to C$ is by definition the structure of a Lie algebra on $\partial:B\to C$ in the monoidal category of chain complexes. This means then that we have a bracket
\[
[-,-]:(B\to C)\otimes(B\to C)\to (B\to C)
\]
which must be antisymmetric and must satisfy the Jacobi identity. The (degree $0$-component of the) bracket $[-,-]:C\otimes C\to C$ renders $C$ a(n ordinary) Lie algebra as before.

The degree-$1$-component of the bracket $[-,-]:(C\otimes B+B\otimes C)\to B$ gives two operations $(c,b)\mapsto c\cdot b$ and $(b,c)\mapsto b\cdot c$ on $B$. The antisymmetry of the bracket implies that $c\cdot b=-b\cdot c$ for all $b\in B$ and all $c\in C$, thus we can restrict to the left operation. The Jacobi identity implies that this left operation is indeed a Lie algebra action. The commutativity of the diagram
\[
  \begin{tikzcd}
    C\otimes B \ar[r] \ar[d] & B \ar[d] \\
    C\otimes C \ar[r] & C
  \end{tikzcd}  
\]
implies that for all $b\in B$ and all $c\in C$, we have 
\[
\partial(c\cdot b)= [c,\partial(b)].
\]
This is the equivariance of the map $\partial:B\to C$. 
The commutativity of the square
\[
  \begin{tikzcd}
    B \otimes B \ar[r] \ar[d] & 0 \ar[d] \\
    C\otimes B \oplus B \otimes C \ar[r] & B
  \end{tikzcd}  
\]
induces the relation called the Peiffer relation. 
Indeed, the commutativity implies for all $b,b'\in B$ that
\[
\partial(b)\cdot b' = b\cdot\partial(b').
\]
We thus have $\partial(b)\cdot b' = b\cdot\partial(b') = -\partial(b')\cdot b$. 
Defining a bracket on $B$ via the prescription $[b,b']\coloneqq \partial(b)\cdot b'$ renders this 
bracket antisymmetric. The Jacobi identity follows like in \cite{LP}. So we obtain a crossed module of Lie algebras $\partial:B\to C$.    
\end{ex}


\begin{ex}
  In \cite{Luo}, the author defines the notion of crossed modules of preLie algebras and shows that the sets of equivalent classes of crossed modules extensions of $(\mathfrak{g}, V)$ is in bijection with $\rmH^3(\mathfrak{g}, V)$ where $\mathfrak{g}$ is a preLie algebra and $V$ is a representation of $\mathfrak{g}$ (see \cite[Thm. 3.2]{Luo}). As in the case of associative algebras or Lie algebras, our definition of $1$-crossed modules of preLie algebras over $(\mathfrak{g}, V)$ coincides with the notion of crossed modules extentions of $(\mathfrak{g}, V)$ defined in \emph{loc. cit.}, and \cite[Thm. 3.2]{Luo} can be viewed as an incarnation of \cref{main_theorem} when $\CP$ is the operad of preLie algebras.
\end{ex}

\begin{rem}
  In \cite{CCKL}, the authors construct some functors between categories of $1$-crossed modules of associative, Lie, Leibniz and diassociative algebras. Several of them provide from an underlying morphism of operads.
\end{rem}

\begin{ex}
Let $A$ be a $\CP$-algebra, $M$ be an $A$-module and $n=1$. Then the trivial map $0:M\to A$ is a crossed module of $\CP$-algebras. Moreover, the inclusion map of an ideal $\partial:I\to A$ gives a(n injective) crossed module, and the extensions $\partial:E\to A$ give rise to (surjective) crossed modules. This is analogous to what happens for many algebraic structures.  
%

\end{ex}


\section{Relation between crossed modules and cohomology}

The main theorem of this article concerns the relation between crossed modules and cohomology. Recall that the operads that we consider are supposed to satisfy the hypotheses \ref{HypothesesOperads}.


\begin{thm}\label{main_theorem}
  Let $\CP$ be an operad satisfying \ref{HypothesesOperads}, let $n$ be a natural number $n\geqslant 1$, let $A$ and $M$ be respectively a $\CP$-algebra and an $A$-module, both concentrated in degree $0$. 
  There is a natural isomorphism of abelian groups
  \[ \crmod^n_\CP(A,M)/\sim\quad \cong\,\,\rmH^{n+1}_\CP(A;M). \]
\end{thm}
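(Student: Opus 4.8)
The isomorphism will be built from the two directions announced in the introduction. In one direction, given an $n$-crossed module $B$ over $(A,M)$, one uses the Homotopy Transfer Theorem (\cref{HTT}) to transfer the strict $\CP$-algebra (hence $\CP_\infty$-algebra) structure on $B$ along a contraction of $B$ onto its homology $\rmH_\bullet(B)$. Since by \cref{proposition_crmod} the complex $B$ is quasi-isomorphic to $A\ltimes M[n]$ and has homology concentrated in degrees $0$ and $n$, this produces a $\CP_\infty$-structure on $A\oplus M[n]$. The key observation is that, because $A$, $M$ lie in a single degree and $M[n]$ is in degree $n$, the transferred $\CP_\infty$-operations with output in $A$ reduce to the original $\CP$-structure of $A$, the operations that are linear in the $M[n]$-slot reproduce the module action, and the only genuinely new piece of data is a single multilinear operation of the form $\CC(A)^{(0)}\to M[n]$ of the right internal degree. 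Under the dictionary of \cref{prop_rosetta_stone}, this is exactly an element of $\Hom(\CC(A),M)$ of cohomological degree $n+1$, and the Maurer--Cartan / codifferential equation for the transferred structure says precisely that it is a cocycle, i.e. lies in $\rmZ^{n+1}_\CP(A;M)$. This defines a map $\Phi\colon \crmod^n_\CP(A,M)\to \rmZ^{n+1}_\CP(A;M)$; one then checks that two elementarily equivalent crossed modules (\cref{definition_equivalence}) are sent to cohomologous cocycles, so that $\Phi$ descends to $\crmod^n_\CP(A,M)/\sim\,\to\rmH^{n+1}_\CP(A;M)$. A quasi-isomorphism $\psi\colon B\to B'$ compatible with the $\varphi$'s induces an $\infty$-isomorphism of the transferred structures fixing $A$ and $M$, and such an $\infty$-isomorphism changes the cocycle by a coboundary; the independence of $\Phi$ on the chosen contraction is the same argument applied to $B=B'$.

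\textbf{The return map.} For the inverse, start from a cocycle $c\in\rmZ^{n+1}_\CP(A;M)$. Viewed through \cref{prop_rosetta_stone}, the data $(A\oplus M[n], \mu_A, \gamma_M, c)$ assembles into a genuine $\CP_\infty$-algebra structure on $A\oplus M[n]$ (the cocycle condition on $c$ is exactly the codifferential equation, using that the lower operations already form a strict structure and that everything is concentrated so that no further obstructions can appear). Apply the Rectification Theorem (\cref{RT}): the $\CP_\infty$-algebra $A\oplus M[n]$ is $\infty$-quasi-isomorphic to the honest $\CP$-algebra $\Omega_\alpha\Bar_\iota(A\oplus M[n])$. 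This rectified algebra is in general unbounded, so the remaining work is to \emph{truncate} it: one replaces $\Omega_\alpha\Bar_\iota(A\oplus M[n])$ by a quasi-isomorphic $\CP$-algebra concentrated in degrees $0,\dots,n$ whose homology sits in degrees $0$ and $n$ with values $A$ and $M$. Concretely, one performs a soft/good truncation at level $n$ — keeping degrees $0,\dots,n-1$, replacing degree $n$ by the cycles $Z_n$ — and checks that this truncation inherits a $\CP$-algebra structure, using again that $\CP$ is concentrated in degree $0$ so that the only operations landing in the top degree are linear in the top slot and hence restrict to cycles. The projection to $A$ and the comparison quasi-isomorphism to $A\ltimes M[n]$ come from the $\infty$-quasi-isomorphism of rectification followed by these truncations; the induced map on kernels is a module map essentially by construction. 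This defines $\Psi\colon \rmZ^{n+1}_\CP(A;M)\to \crmod^n_\CP(A,M)$, and one verifies it sends cohomologous cocycles to equivalent crossed modules (an explicit primitive for $c-c'$ yields an $\infty$-isomorphism, hence after rectification a zig-zag of elementary equivalences), so $\Psi$ descends to $\rmH^{n+1}_\CP(A;M)\to\crmod^n_\CP(A,M)/\sim$.

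\textbf{Bijectivity, group structure, naturality.} That $\Phi$ and $\Psi$ are mutually inverse follows because both constructions are, up to $\infty$-quasi-isomorphism fixing $A$ and $M$, the identity on the underlying $\CP_\infty$-datum: $\Phi\circ\Psi$ transfers the rectified-and-truncated structure back onto $A\oplus M[n]$ and recovers $c$ up to coboundary (contraction of a $\CP$-algebra onto a quasi-isomorphic complex, then HTT, is inverse to rectification at the level of $\CP_\infty$-structures by the uniqueness part of the HTT), and $\Psi\circ\Phi$ produces from $B$ a crossed module that is elementarily equivalent to $B$ through the chain of $\infty$-quasi-isomorphisms relating $B$, its transfer, and the rectification of that transfer (restricting these to the truncations and invoking \cref{proposition_crmod}). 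Finally one checks that the Baer sum on $\crmod^n_\CP(A,M)/\sim$ introduced in Section 4 corresponds under $\Phi$ to addition of cocycles in $\rmH^{n+1}_\CP(A;M)$ — this is a direct computation, the transferred cocycle of a Baer sum being the sum of the transferred cocycles — so $\Phi$ is a group isomorphism; naturality in $(A,M)$ is immediate since all constructions (HTT, rectification, truncation, the operadic bar complex) are functorial.

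\textbf{Main obstacle.} The principal difficulty is the rectification-and-truncation step: the Rectification Theorem produces a large unbounded $\CP$-algebra, and one must show that a length-$n$ truncation of $\Omega_\alpha\Bar_\iota(A\oplus M[n])$ both carries a compatible strict $\CP$-algebra structure and remains quasi-isomorphic to $A\ltimes M[n]$ with the correct identification of kernel and cokernel. Making this truncation functorial and compatible with $\infty$-quasi-isomorphisms (needed for well-definedness on cohomology classes and for $\Psi\circ\Phi=\id$) is the technical heart of the argument; the HTT side is comparatively routine given the degree constraints of \ref{HypothesesOperads}.
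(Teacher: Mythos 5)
Your overall architecture is the same as the paper's: HTT to transfer the crossed-module structure onto $A\oplus M[n]$, degree considerations to isolate the single new component $\widetilde{\mu}_{n+1}\in\rmC^{n+1}_\CP(A;M)$, the Maurer--Cartan equation to see it is a cocycle, an $\infty$-morphism with $0$th component $\id_V$ to handle well-definedness on equivalence classes, and Rectification plus truncation for the return map, with additivity deferred to the Baer-sum section. All of that matches.

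There is, however, one step that fails as written, and it is precisely the step you identify as the technical heart. You truncate $\Omega_\alpha\Bar_\iota(A\oplus M[n])$ by ``keeping degrees $0,\dots,n-1$ and replacing degree $n$ by the cycles $Z_n$''. For a homological complex truncated \emph{from above} at degree $n$, this is the wrong good truncation: with $Z_n=\ker(\partial_n)$ placed in top degree and everything above discarded, nothing maps into degree $n$ any more, so the $n$th homology of your truncation is all of $Z_n$ rather than $Z_n/\operatorname{im}(\partial_{n+1})=H_n\cong M$. Since the rectified algebra $\Omega_\alpha\Bar_\iota V$ is enormous, $\operatorname{im}(\partial_{n+1})$ is far from zero, so your truncation is \emph{not} quasi-isomorphic to $A\ltimes M[n]$ and hence is not an $n$-crossed module over $(A,M)$. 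The correct truncation (the one the paper uses) replaces the degree-$n$ part by the \emph{cokernel} of the incoming differential, $B_n/\operatorname{im}(\partial_{n+1})$; one then checks, exactly as you do for cycles but using the derivation property of $\partial$ with respect to the $\CP$-structure, that the operations descend to this quotient (the paper's commutative-diagram argument). The kernel-of-outgoing-differential truncation you wrote is the one appropriate for truncating \emph{below} a given degree. The fix is local and does not disturb the rest of your argument, but as stated the return map $\Psi$ does not land in $\crmod^n_\CP(A,M)$.
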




The proof of a natural bijection instead of a natural isomorphism in this theorem will occupy the rest of this section. We will enhance the bijection to an isomorphism of abelian groups in the next section (see \cref{corGroupBijection}).

\subsection*{1$^{\rm st}$ step:} Construction of a morphism $\crmod^n_\CP(A,M)\to \rmZ^{n+1}_\CP(A,M)$. 

According to Proposition \ref{proposition_crmod}, an $n$-crossed module in $\crmod^n_\CP(A,M)$ consists of a morphism of dg vector spaces $p\colon (B,\mu,\partial)\xrightarrow{\sim} A\oplus M[n]$, 
where $(B,\partial)$ is a dg vector space $0\to B_n\xrightarrow{\partial_n}\cdots\xrightarrow{\partial_1} B_0\to 0$, thus concentrated in degrees $0,\ldots,n$, and $(B,\mu)$ is a $\CP$-algebra such that $p$ induces an isomorphism of $\CP$-algebras
\[ (B_0\,/\,\partial_1(B_1),\mu)\xrightarrow{\cong}(A,\gamma_A) \]
and such that
\[ 
  p|_{\ker(\partial_n)} \colon 
  (\ker(\partial_n),\mu)
  \xrightarrow{\cong} (M[n],\gamma_M) 
\]
is an isomorphism of $A$-modules (with respect to the above $\CP$-algebra structures).  
We will view the $\CP$-algebra structure $(B,\mu)\in\CP\myhyphen{\sf Alg}$ as a morphism $\mu\colon \CP(B)\to B$, or as a morphism $\mu\colon \CP\to\End_B$ interchangeably. 
It is implicit in the preceding definitions that the structure map $\mu$ induces a map on the quotient:
\[
  \begin{tikzcd}
    \CP(B_0) \arrow[r, "\mu"]  \arrow[d,"\CP(p)"']
    & B_0  \arrow[r,"p"] 
    & B_0 / \partial_1B_1 \\    
    \CP\left(B_0 / \partial_1B_1\right) \arrow[rru, "\exists \ \overline{\mu}_0"', bend right=10]
  \end{tikzcd}.
\]

Now, complete the map $p$ to {\it homotopy retract data} $(i, h, p)$, i.e. we call $V\coloneqq A\oplus M[n]$ and we choose a section $i$ (a morphism of dg vector spaces) of $p:(B,\mu,\partial)\to(V,\gamma)$ and a homotopy $h:(B,\mu,\partial_B)\to(B,\mu,\partial_B)$ such that: 
\[
  \id_B-i\,p=\partial_B\,h + h\,\partial_B 
  \quad\mbox{and}\quad 
  p\,i=\id_V \ ;
\]
it is possible to impose this, because $V \cong H_\bullet(B,\partial_B)$. With these notations, we have $\overline{\mu}_0=p\,\mu_0\,i=\gamma_A$.


By the Homotopy Transfer Theorem \ref{HTT}, we obtain by transfer the structure of an $\Omega\CC$-algebra on $V$: there exists a twisting cochain $\overline{\mu} \in \mathrm{Tw}(\scrC ,\End_V)$ i.e. $\overline{\mu}\colon \CC\to\End_V$ of degree $-1$ such that
\[ 
  \partial_V\overline{\mu}+\overline{\mu}\star\overline{\mu}=0.
\] 
Again, $\overline{\mu}$ corresponds to a morphism 
$\overline{\mu}\colon\CC(A\oplus M[n])\to A\oplus M[n]$ of degree $-1$. We claim that the only possibly non-zero components of $\overline{\mu}$ are 
\[
  \overline{\mu}_A\colon  \CC(A)\too A, \quad
  \overline{\mu}_M\colon  \CC(A;M[n])\too M[n] \quad\mbox{and}\quad
  \overline{\mu}_{n+1}\colon  \CC(A)\too M[n].  
\]
This follows from degree considerations: recall that $\scrC$ is supposed to be connected. The largest possible degree-difference is $n$, and these three maps are the only combinations of degree $0$ and degree $n$ elements which give rise to a degree-difference of at most $n$. Notice that the degree of a non-identity cooperation must be $\geq 1$. Using desuspensions, we have the following three maps :
\[\begin{aligned}
  \widetilde{\mu}_A \coloneqq&\,  \overline{\mu}_A \colon \CC(A) \too A \\
  \widetilde{\mu}_M \coloneqq&\, (-1)^n \susp^{-n}\overline{\mu}_M\susp^n \colon \CC(A;M) \too M \\ 
  \widetilde{\mu}_{n+1} \coloneqq&\, \susp^{-n}\overline{\mu}_{n+1}\colon \CC(A) \too M
\end{aligned}\]
By the Homotopy Transfer Theorem, we have by construction
\[
  \widetilde{\mu}_A=p(\gamma_A\circ\alpha)i^{\otimes\bullet}|_A
  \quad\mbox{and}\quad
  \widetilde{\mu}_M= (-1)^n p(\gamma_M\circ\alpha)i^{\otimes\bullet}|_{(A;M)},
\]
so these maps are completely determined by the $\CP$-algebra structure on $A$ and the $A$-module structure on $M$. 
The map $\widetilde{\mu}_{n+1}$ is a map of degree $-(n+1)$ and is viewed as a cochain $\widetilde{\mu}_{n+1}\in(\rmC^{n+1}_\CP(A;M),\partial_\alpha)$. 
%

\begin{lem}
The cochain $\widetilde{\mu}_{n+1}\in(\rmC^{n+1}_\CP(A;M),\partial_\alpha)$ is a cocycle, i.e. $\widetilde{\mu}_{n+1}\in \rmZ^{n+1}_\CP(A;M)$. 
\end{lem}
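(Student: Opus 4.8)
The goal is to show that $\partial_\alpha \widetilde{\mu}_{n+1}=0$ in $\rmC^{n+2}_\CP(A;M)$, and the natural source of this identity is the Maurer--Cartan equation $\partial_V\overline{\mu}+\overline{\mu}\star\overline{\mu}=0$ for the transferred $\Omega\CC$-structure on $V=A\oplus M[n]$. The plan is to decompose this single equation according to the $N$-linearity filtration in $M$ (writing $V=A\oplus M[n]$ and using the analytic decomposition $\CC(A\oplus M[n])=\CC(A)\oplus\CC(A;M[n])\oplus\cdots$ from the Preliminaries) and to isolate the component landing in $M[n]$ with no $M$-entries in the source, i.e. the component in $\Hom(\CC(A),M[n])$. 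Concretely, I would first record that $\overline{\mu}$ has only the three components $\overline{\mu}_A,\overline{\mu}_M,\overline{\mu}_{n+1}$, so that in the convolution product $\overline{\mu}\star\overline{\mu}$ only a handful of composites survive; then project the whole Maurer--Cartan identity onto $\Hom(\CC(A),M[n])$.

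In that projection, three kinds of terms appear. The term $\partial_V\overline{\mu}_{n+1}$ contributes (after the desuspension bookkeeping that defines $\widetilde\mu_{n+1}$) the piece $\bar\partial(\widetilde\mu_{n+1})$ coming from the internal differential of $\CC$, together with $\pm\,\widetilde\mu_{n+1}\,d$ coming from the differential $d_1+d_2$ on $\Bar_\alpha A=\CC(A)$ — precisely the two summands in the definition \eqref{definition_differential} of $\partial_{\pi_\alpha}$. Wait, I need to be careful here: the differential on $\Bar_\alpha A$ involves $\alpha$ and the $\CP$-structure $\gamma_A$ on $A$, and these enter the Maurer--Cartan equation only through the mixed convolution terms $\overline{\mu}_M\star\overline{\mu}_{n+1}$ and $\overline{\mu}_{n+1}\star\overline{\mu}_A$ (one factor supplies a cooperation splitting off a $\CC(A)\to A$ applied by $\overline{\mu}_A=\widetilde\mu_A$, the other supplies $\overline{\mu}_{n+1}$, or one factor is $\overline{\mu}_{n+1}$ feeding into $\overline{\mu}_M$ acting in the $M$-slot). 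Using the formulas $\widetilde\mu_A=p(\gamma_A\kringel\alpha)i^{\otimes\bullet}$ and $\widetilde\mu_M=(-1)^n p(\gamma_M\kringel\alpha)i^{\otimes\bullet}$, these two convolution contributions assemble exactly into the term $\bar\partial(\widetilde\mu_{n+1})$ in \eqref{definition_differential} — the composite $\CC\xrightarrow{\triangle}\CC\circ\CC(A)\xrightarrow{\alpha\circ(\mathrm{proj};\widetilde\mu_{n+1})}\CP\circ(A;M)\xrightarrow{\gamma_M}M$ — while the self-convolution terms not involving $\overline{\mu}_{n+1}$ either vanish (two $M$-entries would be needed, or they land in $A$) or reproduce, via $\overline{\mu}_M$ in the $M$-slot, the remaining part of $\bar\partial$. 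So the projected Maurer--Cartan equation reads $\partial_\alpha\widetilde\mu_{n+1}=0$ up to signs.

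The steps in order, then, are: (1) write out the analytic/$N$-linear decomposition of $\Hom(\CC(V),V)$ and identify the summand $\Hom(\CC(A),M[n])$; (2) compute the $\Hom(\CC(A),M[n])$-component of $\partial_V\overline{\mu}$, matching it to $-(-1)^{|\widetilde\mu_{n+1}|}\widetilde\mu_{n+1}\,d$ plus the $\CC$-differential contribution to $\bar\partial$; (3) compute the $\Hom(\CC(A),M[n])$-component of $\overline{\mu}\star\overline{\mu}$, using that only $\overline{\mu}_{n+1}\star\overline{\mu}_A$, $\overline{\mu}_A\star\overline{\mu}_{n+1}$ (i.e. $\overline{\mu}_{n+1}$ in either the outer or an inner slot of a cooperation whose other inputs are fed through $\overline{\mu}_A$), and $\overline{\mu}_M\star\overline{\mu}_{n+1}$-type terms survive, and identifying their sum with the convolution expression for $\bar\partial(\widetilde\mu_{n+1})$ via the explicit HTT formulas for $\widetilde\mu_A,\widetilde\mu_M$; (4) collect signs, using $\asusp\susp=1=-\susp\asusp$ and the definitions of the desuspended maps, to conclude $\partial_{\pi_\alpha}\widetilde\mu_{n+1}=0$.

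\textbf{Main obstacle.} The conceptual content is light — it is ``Maurer--Cartan implies cocycle,'' a phenomenon familiar from the fact that operadic cohomology of $A$ with coefficients in $M$ is governed by the convolution dg Lie algebra $\Hom_\S(\CC,\End_{A\oplus M})$ and its square-zero elements. The real work, and the likely source of error, is bookkeeping: (a) tracking the suspension/desuspension signs $\susp^{\pm n}$ that relate $\overline{\mu}_M,\overline{\mu}_{n+1}$ to $\widetilde\mu_M,\widetilde\mu_{n+1}$, and reconciling them with the sign $(-1)^{|g|}$ in \eqref{definition_differential} and the Koszul signs in the convolution product; (b) verifying that the ``exactly one $M$-entry'' constraint on $\gamma_M$ (i.e. $M$ is only a module, with no quadratic-in-$M$ operations) forces all potentially obstructing higher terms to vanish, which is exactly where the hypothesis that $M$ is an $A$-\emph{module} (rather than a subalgebra) is used; and (c) checking that the $d_1+d_2$ differential on $\CC(A)$ appearing in $\partial_{\pi_\alpha}$ matches what the internal $\CC$-differential and the $\overline{\mu}_A$-convolution produce — in particular that the $d_2$ part (built from $\alpha$ and $\gamma_A$) is reproduced correctly. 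I expect step (3), assembling the convolution terms into the single composite $\gamma_M\,(\alpha\circ(\mathrm{proj};\widetilde\mu_{n+1}))\,\triangle$, to be the most delicate, since it requires recognizing that the HTT tree formulas for $i_\infty,p_\infty$ collapse — on the relevant low-degree components — to the naive composites $p(\gamma\kringel\alpha)i^{\otimes\bullet}$, with the homotopy $h$ contributing nothing because $B$ is concentrated in degrees $0,\dots,n$ and the output degree is already maximal.
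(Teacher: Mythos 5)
Your plan is correct and is essentially the paper's own argument: the paper likewise proves the lemma by expanding $\partial_{\pi_\alpha}(\widetilde{\mu}_{n+1})=\bar{\partial}(\widetilde{\mu}_{n+1})-(-1)^{n+1}\widetilde{\mu}_{n+1}\,(d_1+d_2)$ term by term and identifying the result with $(-1)^n\susp^{-n}(\partial\overline{\mu}+\overline{\mu}\star\overline{\mu})|_{\CC_{n+2}(A)}$, which vanishes because $\overline{\mu}$ is a twisting morphism. The only slip in your sketch is that the composite with $\overline{\mu}_{n+1}$ in an inner slot feeding into $\overline{\mu}_A$ does not survive the projection onto $\Hom(\CC(A),M[n])$ (its $M$-valued output cannot enter $\overline{\mu}_A$); only $\overline{\mu}_M\circ_{(1)}\overline{\mu}_{n+1}$ (giving $\bar{\partial}(\widetilde{\mu}_{n+1})$) and $\overline{\mu}_{n+1}\circ_{(1)}\overline{\mu}_A$ (giving the $d_2$ term) contribute, but this washes out once the computation is carried through.
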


\begin{proof}
We want to show that $\widetilde{\mu}_{n+1}\in \rmZ^{n+1}_\CP(A;M)$, i.e. $\partial_{\pi_\alpha}(\widetilde{\mu}_{n+1})=0$. By definition of $\partial_{\pi_\alpha}$, see Equation \eqref{definition_differential}, we have 
\begin{equation}  \label{*}
\partial_{\pi_\alpha}(\widetilde{\mu}_{n+1})=\bar{\partial}(\widetilde{\mu}_{n+1})-(-1)^{n+1}\widetilde{\mu}_{n+1}\,(d_1+d_2),
\end{equation}
where $d_1+d_2$ is the differential of $\rmB_\alpha(A)$, the bar construction of the $\scrP$-algebra $A$. Recall that the differential $d_1$ is induced by the differential on $\CC$ and $d_2$ obtained from extending a suitable composition of $\alpha$ and $\gamma_A$.   
Let us compute the three terms in the sum \eqref{*}:
\[
  \begin{tikzcd}
    \bar{\partial}(\widetilde{\mu}_{n+1})\colon\CC(A) 
    \arrow[r,"\triangle_{(1)}"] 
    &\CC\circ_{(1)}\CC(A) \cong \CC(A;\CC(A))
    \arrow[rr, "\alpha\circ(\id_A;\widetilde{\mu}_{n+1})"]
    && \CP(A;M) 
    \arrow[d, "\CP(i)"']
    \arrow[r, "\gamma_M"]
    & M 
    \\ 
    &&& \CP(B_0;B_n) 
    \arrow[r,"\mu"]
    & B_n 
    \arrow[u,"p"']
  \end{tikzcd}  
\]
This description of $\bar{\partial}(\widetilde{\mu}_{n+1})$ is equivalent to 
\[ 
  \bar{\partial}(\widetilde{\mu}_{n+1})\colon \CC(A)
  \xrightarrow{\triangle_{(1)}}
  \CC\left(A;\CC(A)\right)
  \xrightarrow{\CC(A;\widetilde{\mu}_{n+1})} 
  \CC(A;M) 
  \xrightarrow{\widetilde{\mu}_M} 
  M,
\]
so we have 
\[
  \begin{aligned}
    \bar{\partial}(\widetilde{\mu}_{n+1}) 
    & = \widetilde{\mu}_M \; \CC(A; \widetilde{\mu}_{n+1}) \; \triangle_{(1)}\\ 
    & = (-1)^n\susp^{-n}\overline{\mu}_M\susp^n \; \CC(A; \susp^{-n}\overline{\mu}_{n+1}) \; \triangle_{(1)}\\ 
    &= \overline{\mu}_M \; \CC(A; \susp^{-n}\overline{\mu}_{n+1}) \; \triangle_{(1)} \\
    &=(-1)^n \susp^{-n} \overline{\mu}_M \; \CC(A; \overline{\mu}_{n+1}) \; \triangle_{(1)}
  \end{aligned}
\]
Next comes $\widetilde{\mu}_{n+1}\, d_2$:
\[ 
  \widetilde{\mu}_{n+1}\, d_2\colon \CC(A)
  \xrightarrow{\triangle_{(1)}}
  \CC(A;\CC(A))
  \xrightarrow{\CC(\id_A;\alpha(A))}
  \CC(A,\CP(A))
  \xrightarrow{\CC(\id_A;\gamma_A)}
  \CC(A)
  \xrightarrow{\widetilde{\mu}_{n+1}} M,
\]
 which can be expressed as :
\[
    \widetilde{\mu}_{n+1}\, d_2 
     = \widetilde{\mu}_{n+1} \CC(A; \widetilde{\mu}_A) \, \triangle_{(1)}
     = \susp^{-n} \overline{\mu}_{n+1} \CC(A; \overline{\mu}_A) \, \triangle_{(1)}.
\]
These two identifications imply that
\[ 
  \begin{aligned}
  \bar{\partial}(\widetilde{\mu}_{n+1})-(-1)^{n+1}\widetilde{\mu}_{n+1}\, d_2
  & = (-1)^n\susp^{-n}\left(\overline{\mu}_M \; \CC(A; \overline{\mu}_{n+1}) + \overline{\mu}_{n+1} \CC(A; \overline{\mu}_A)\right) \triangle_{(1)} \\
  & = (-1)^n\susp^{-n} \left(\overline{\mu} \star \overline{\mu} \right) |_{\CC_{n+2}(A)}
  \end{aligned}
\]
The third term is $\widetilde{\mu}_{n+1}\, d_1$:
\[
  \widetilde{\mu}_{n+1}\, d_1\colon \CC(A)
  \xrightarrow{d_\CC\circ A} 
  \CC(A)
  \xrightarrow{\widetilde{\mu}_{n+1}}
  M,
\]
then  \[\widetilde{\mu}_{n+1}\, d_1 = \widetilde{\mu}_{n+1} d_\CC = \susp^{-n}\overline{\mu}_{n+1} d_\CC = \partial\overline{\mu} |_{\CC_{n+2}(A)}.\]
In conclusion, we obtain
\[
  \bar{\partial}(\widetilde{\mu}_{n+1})-(-1)^{n+1}\widetilde{\mu}_{n+1}\,(d_1+d_2)
  =
  (-1)^n\susp^{-n}(\partial\overline{\mu}+\overline{\mu}\star\overline{\mu})|_{\CC_{n+2}(A)}=0,
\]
because $\overline{\mu}\in{\rm Tw}(\CC,\End_V)$ is a twisting morphism. 
\end{proof}


\begin{rem}
In the rest of this proof, we will use the same notation for the transferred structure $\overline{\mu}$ and the associated cocycle $\widetilde{\mu}$.
\end{rem}

\subsection*{$2^{\rm nd}$ step:} Passage to the quotient. 

Here we consider two $n$-crossed modules $(B,\mu^B)$ and $(C,\mu^C)$ over $(A,M)$ (i.e. with $(B_0/\partial_1 B_1,\mu_0^B)\cong(A,\gamma_A)$ an isomorphism of $\CP$-algebras induced by $p^B$ and $p^B|_{\ker(\partial^B_n)}:(\ker(\partial_n^B),\mu_n^B)\cong (M[n],\gamma_M)$ an isomorphism of $A$-modules, and the same statement for $C$ instead of $B$) such that 
\[
  \begin{tikzcd}
  (B,\mu^B) \arrow[rr, "\simeq" , "q"'] \ar[dr,"{p^B}"'] & & (C,\mu^C) \arrow[dl,"{p^C}"]  \\
  & V\coloneqq A\oplus M[n] &
  \end{tikzcd}
\]
where $q\colon(B,\mu^B)\to(C,\mu^C)$ is a quasi-isomorphism of $\CP$-algebras over $V=A\oplus M[n]$. Moreover, $(B,\mu^B)$ and $(C,\mu^C)$ are equipped with homotopy retraction data, denoted by $(h^B,i^B)$ and $(h^C,i^C)$ respectively, which satisfy as before
\[
i^B\,p^B-\id_B=d_B\,h^B+h^B\,d_B,\quad\quad p^B\,i^B=\id_V,
\]
and
\[
i^C\,p^C-\id_C=d_C\,h^C+h^C\,d_C,\quad\quad p^C\,i^C=\id_V.
\]
By the commutativity of the diagram, we have $p^B=p^C\,q$ which implies $p^B\,i^B=\id_V=p^C\,q\,i^B$. 

\medskip

Using the HTT (see Theorem \ref{HTT}), we obtain $\Omega\CC$-structures $(V,\widetilde{\mu}^B)$ and $(V,\widetilde{\mu}^C)$ by transfer from $B$ and $C$ respectively, and by the $1^{\rm st}$ step, we obtain two $(n+1)$-cocycles $\widetilde{\mu}_{n+1}^B, \widetilde{\mu}_{n+1}^C\in \rmZ^{n+1}_\CP(A;M)$. 
As described in \cite[Theorem 10.3.6]{Loday:2012}, the $\Omega\CC$-structures on $(B,\mu^B)$ (which is the original $\CP$-algebra structure) and $(V,\widetilde{\mu}^B)$ (which is the transferred structure) are $\infty$-quasi-isomorphic via an $\infty$-quasi-isomorphism $i^B_{\infty}$ with $\infty$-quasi-inverse $p^B_{\infty}$ (see \cite[Theorem 10.4.1]{Loday:2012} for the existence of the $\infty$-quasi-inverse). The same is true for $(C,\mu^C)$ and $(V,\tilde{\mu}^C)$, where we denote the $\infty$-quasi-isomorphism $i^C_\infty$ and its $\infty$-quasi-inverse $p_\infty^C$. 

Using these data, we may define an $\infty$-morphism $Q$ as the following composition:
\[
  \begin{tikzcd}
  (B,\mu^B) \arrow[r,"q"] & (C,\mu^C) \arrow[d,squiggly,"{p^C_\infty}"] \\ 
  (V,\tilde{\mu}^B) \arrow[u,squiggly,"{i^B_\infty}"] \arrow[r,squiggly,"Q"] & (V,\tilde{\mu}^C)
  \end{tikzcd}
\]
Note that the degree $0$ component of $Q$, denoted $Q_0$, is the identity of $V$. Indeed, as $i^B$ is a section of $p^B$, we have $p^B\,i^B=\id_V$, and as $q$ is a morphism over $V$, we have $p^C\,q=p^B$. These two facts imply $Q_0=p^C\,q\,i^B=p^B\,i^B=\id_V$. (Here we use not only the explicit expression of $i^B_\infty$ from the HTT, but also the explicit expression of the $\infty$-quasi-inverse $p_\infty^C$ from \cite[Theorem 10.4.1]{Loday:2012}.) 

\begin{lem}  \label{lemma_cohomologuous}
In the above situation, the two $(n+1)$-cocycles $\widetilde{\mu}_{n+1}^B, \widetilde{\mu}_{n+1}^C\in \rmZ^{n+1}_\CP(A;M)$ are cohomologuous. 
\end{lem}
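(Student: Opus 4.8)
The plan is to exploit the $\infty$-morphism $Q : (V,\tilde\mu^B) \leadsto (V,\tilde\mu^C)$ with $Q_0 = \id_V$ constructed just above, and extract from it an explicit primitive for the difference of the two cocycles. First I would analyze the components of $Q$ by the same degree bookkeeping used in the $1^{\rm st}$ step: since $V = A\oplus M[n]$ with $A$ in degree $0$ and $M[n]$ in degree $n$, and $\CC$ is connected (so non-identity cooperations sit in positive degree), the only components of $Q$ that can be non-zero are $Q_0 = \id_V$, a component $\CC(A) \to A$, a component $\CC(A;M[n])\to M[n]$, and a single "off-diagonal" component $Q_n \colon \CC(A) \to M[n]$ of degree $-n$. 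After desuspension this last component defines a cochain $\tilde Q \in \rmC^n_\CP(A;M)$; the claim will be that (up to sign) $\partial_{\pi_\alpha}(\tilde Q) = \tilde\mu^C_{n+1} - \tilde\mu^B_{n+1}$, which is exactly what cohomologousness requires.

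The key step is to unwind the $\infty$-morphism equation $\partial Q = Q \rhd \tilde\mu^B - \tilde\mu^C \lhd Q$ and restrict it to the arity/weight component $\CC_{n+2}(A) \to M[n]$, i.e. the component landing in $M$ with all inputs in $A$ and total internal degree making it a degree $-(n+1)$ operation. On the right-hand side, $Q \rhd \tilde\mu^B$ contributes two kinds of terms: one where $\tilde\mu^B$ is the diagonal piece $\tilde\mu^B_A$ on $A$ followed by the off-diagonal $Q_n$ (this reproduces $\tilde Q \circ (\text{piece of } d_2)$, matching the $d_2$-term in $\partial_{\pi_\alpha}$), and one where $\tilde\mu^B$ is its genuinely off-diagonal piece $\tilde\mu^B_{n+1}$ followed by $Q_0 = \id$ (this gives exactly $\tilde\mu^B_{n+1}$). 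Symmetrically, $\tilde\mu^C \lhd Q$ contributes the $\tilde\mu^C_M \circ (\CC(A;Q_n))$-term, which is the $\bar\partial$-part of $\partial_{\pi_\alpha}(\tilde Q)$, together with $\tilde\mu^C_{n+1}$ composed with the diagonal of $Q$, i.e. $\tilde\mu^C_{n+1}$ itself. The left-hand side $\partial Q$ restricted to this component is $Q_n \circ d_\CC$, which is the $d_1$-part of $\partial_{\pi_\alpha}(\tilde Q)$. Collecting these, after carefully tracking the suspension/desuspension signs (exactly as in the lemma of the $1^{\rm st}$ step, since the relevant maps $\tilde\mu_A, \tilde\mu_M, \tilde\mu_{n+1}$ differ from their barred versions by the same $\susp^{\pm n}$ twists), the $\infty$-morphism equation becomes the cochain identity $\pm\,\partial_{\pi_\alpha}(\tilde Q) = \tilde\mu^C_{n+1} - \tilde\mu^B_{n+1}$.

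I expect the main obstacle to be purely bookkeeping: verifying that no other components of $Q$ or of $\tilde\mu^B, \tilde\mu^C$ can interfere in the chosen weight-graded piece (this is where connectivity of $\CC$ and concentration of $A,M$ in degree $0$ are essential), and then getting all the Koszul signs from the desuspensions $\susp^{-n}(\,\cdot\,)\susp^n$ to line up so that the output is literally $\partial_{\pi_\alpha}(\tilde Q)$ rather than its negative or some rescaling. A secondary point to be careful about is that $Q$ is built as a composite $p^C_\infty \circ q \circ i^B_\infty$ of three $\infty$-morphisms, so strictly one should either argue abstractly (any $\infty$-morphism of transferred structures with identity linear part yields a primitive, by the computation above applied to $Q$ as a single $\infty$-morphism) or invoke the composition formula for $\infty$-morphisms; the cleaner route, which I would take, is the former — use only that $Q$ is \emph{some} $\infty$-morphism $(V,\tilde\mu^B)\leadsto(V,\tilde\mu^C)$ with $Q_0 = \id_V$, which was already established, and never reopen the composite.
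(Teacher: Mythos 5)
Your proposal follows essentially the same route as the paper: both exploit the $\infty$-morphism relation $\partial Q = Q\rhd\widetilde{\mu}^B-\widetilde{\mu}^C\lhd Q$ for the composite $Q$ with $Q_0=\id_V$, restrict it to the component with all inputs in $A$ and output in $M[n]$, identify the resulting terms with the three pieces $\bar\partial(Q_n)$, $Q_n\,d_1$, $Q_n\,d_2$ of $\partial_{\pi_\alpha}(Q_n)$, and read off that $\widetilde{\mu}^C_{n+1}-\widetilde{\mu}^B_{n+1}$ is the coboundary of $Q_n$. The only slip is an off-by-one in the weight of the relevant component: since $Q$ has degree $0$ and the structure maps have degree $-1$, the equation must be restricted to $\CC_{n+1}(A)$ (as in the paper), not $\CC_{n+2}(A)$, the latter being the component relevant to the Maurer--Cartan equation in the $1^{\rm st}$ step.
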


\begin{proof}
This is due to the fact that $Q$ is an $\infty$-morphism, which is equivalent (by \cite[Theorem 10.2.3]{Loday:2012}; writing $\rhd$ and $\lhd$ for the left- and right actions as in \cite{HLV}) to the equation
\[
\partial Q=Q\rhd\widetilde{\mu}^B-\widetilde{\mu}^C\lhd Q.
\] 
Let us render explicit the three terms of this equation in degree $n+1$. As the differential in $V=A\oplus M[n]$ is trivial, the degree $(n+1)$-part of $\partial Q$ is just the composition
\[
(\partial Q)|_{n+1}\colon\CC_{n+1}
\xrightarrow{d_\CC} \CC_n
\xrightarrow{Q_n} \End_V.
\]
The degree $(n+1)$-part of $Q\lhd\widetilde{\mu}^B$ is given by the composition

\[
 \begin{tikzcd}
  \CC_{n+1} \arrow[r,"{\triangle_{(1)}}"] & (\CC\circ_{(1)}\CC)_{n+1} \arrow[d] 
  \arrow[rr,"{Q\circ\widetilde{\mu}^B}"] && \End_V^V        \\ 
  & \CC_0\circ_{(1)}\CC_{n+1} 
  \arrow[rr,"{Q_0\circ_{(1)}\widetilde{\mu}^B_{n+1}}"] 
  \arrow[d, phantom, "\oplus" description]
  && \End_V^V\circ_{(1)}\End_V     
  \arrow[u,"{{\rm comp}}"] \\
  & \CC_n\circ_{(1)}\CC_1 
  \arrow[rru,"{Q_n\circ_{(1)}\widetilde{\mu}^B_A}"'] & &
 \end{tikzcd}  
\]  
and the degree $(n+1)$-part of $\widetilde{\mu}^C\rhd Q$ is given by the composition
\[
  \begin{tikzcd}
  \CC_{n+1} 
  \arrow[r,"\triangle"] 
  &  (\CC\circ\CC)_{n+1} 
  \arrow[d] 
  \arrow[r,"{\widetilde{\mu}^C\circ Q}"] 
  &   \End_V^V \\ 
  & \CC_1\circ_{(1)}\CC_{n}  
  \arrow[d, phantom, "\oplus" description]
  \arrow[r,"{\widetilde{\mu}^C_M\circ Q_n}"] 
  & \End_V\circ\End_V^V \arrow[u,"{{\rm comp}}"] \\
  & \CC_{n+1}\circ\CC_0 \arrow[ru,"{\widetilde{\mu}^C_{n+1}\circ Q_0}"'] & 
  \end{tikzcd}  
\] 

Observe that while $Q_n:\CC(A)\to M[n]$ is of degree $0$, the map $Q_n:\CC(A)\to M$ 
is of degree $n$. The map $Q_n:\CC_n\to\End_M^A\subset\End_V^V$ is well-defined in $\rmC^n_\CP(A;M)$, because $\widetilde{\mu}^C_A=\widetilde{\mu}^B_A$ and $\widetilde{\mu}^C_M=\widetilde{\mu}^B_M$. 
Let us compare the three terms of $\partial Q=Q\rhd\widetilde{\mu}^B-\widetilde{\mu}^C\lhd Q$ with $\partial_{\pi_\alpha}(Q_n)$. As before, we have 
\[
  \partial_{\pi_\alpha}(Q_n)=\bar{\partial}(Q_n)-(-1)^n Q_n\,(d_1+d_2).
\]
The first factor reads explicitly:
\[
  \begin{tikzcd}
    \bar{\partial}(Q_n):\CC_{n+1} 
    \arrow[r, "\triangle_{(1)}"]
    & (\CC\circ_{(1)}\CC)_{n+1}
    \arrow[r, "\alpha\circ Q_n"] 
    \arrow[d]
    & \CP\circ\End_V^V 
    \arrow[rr,"\gamma_M\circ \id_{\End_V^V}"]
    && \End_V\circ\End_V^V 
    \arrow[r, "\mathrm{comp}"]
    & \End_V^V 
    \\ 
    & \CC_1\circ\CC_n 
    \arrow[rrru,"\widetilde{\mu}_M^C\circ_{(1)}Q_n"']
    & & & &
  \end{tikzcd}  
\]
The second factor reads explicitly:
\[
  (Q_n\circ d_1)|_{n+1}:\CC_{n+1}
  \xrightarrow{d_\CC}
  \CC_n
  \xrightarrow{Q_n}
  \End_V^V.
\]
The third factor reads explicitly:
\[
  \begin{tikzcd}[column sep=4.5ex]
    (Q_n\circ d_2)|_{n+1}\colon \CC_{n+1} 
    \arrow[r,"\triangle_{(1)}"]
    & (\CC\circ_{(1)}\CC)_{n+1}
    \arrow[rr, "\id_\CC\circ_{(1)}\alpha"]
    \arrow[d]
    && \CC\circ_{(1)}\CP  
    \arrow[rr, "Q_n\circ_{(1)}\gamma_A"]
    && \End_V^V\circ\End_V 
    \arrow[r, "\mathrm{comp}"]
    & \End_V^V. \\
    & \CC_n\circ_{(1)}\CC_1
    \arrow[rrrru, "Q_n\circ_{(1)}\widetilde{\mu}_A^B"']
    &&&&&
  \end{tikzcd}
\]
We observe that with respect to the three terms of $\partial Q=Q\rhd\widetilde{\mu}^B-\widetilde{\mu}^C\lhd Q$, only the terms involving $Q_0$ are missing. Therefore $\partial Q=Q\rhd\widetilde{\mu}^B-\widetilde{\mu}^C\lhd Q$ is equivalent to 
\[
  \partial_{\pi_\alpha}(Q_{n})-(Q_0\,{\mu}^B_{n+1}-{\mu}_{n+1}^C\, Q_0)=0.
\]
As $Q_0=\id_V$, this means that the difference between the cocycles $\widetilde{\mu}^B_{n+1}$ and $\widetilde{\mu}^C_{n+1}$ is a coboundary. 
\end{proof}

\begin{rem}
Note that this step implies in particular that the cohomology class $[\widetilde{\mu}_{n+1}]$ associated to a crossed module $(B,\mu,\partial)$ does not depend on the choice of the homotopy retract data $(i,h)$, because two different homotopy retract data for the same crossed module may be viewed as the special case of the above where $C=B$ and $q=\id_B$. 
\end{rem}

\medskip

\noindent{\bf $3^{\rm rd}$ step:}  Construction of a morphism $\rmZ^{n+1}_\CP(A,M)\to\crmod^n_\CP(A,M)$. 

Let us recall that $\CP$, $\CC$ and $\alpha : \CC \to \CP$ satisfy the hypotheses \ref{HypothesesOperads}.  
We use now the Rectification Theorem \ref{RT} in order to rectify the $\CP_\infty$-algebra structure on $V\coloneqq A\oplus M[n]$ (which we obtained via the HTT) to a $\CP$-algebra structure. 
More precisely, given a cocycle $\widetilde{\mu}_{n+1}\in Z^{n+1}_\CP(A,M)$, we view as before $V\coloneqq A\oplus M[n]$ as a $\CP_\infty$-algebra (with structure maps given by $\widetilde{\mu}_A$, $\widetilde{\mu}_M$ and $\widetilde{\mu}_{n+1}$). The Rectification Theorem \ref{RT} assures then that the $\CP_\infty$-algebra $V$ is $\infty$-quasi-isomorphic to the dg $\CP$-algebra $\Omega_\alpha \Bar_\iota V$, where $\alpha:\CC\to\CP$ is the given twisting morphism and $\iota:\CC\to\Omega\CC$ is the canonical twisting morphism.  

We claim that $\Omega_\alpha \Bar_\iota V$ may be truncated in degree $n$ in order to obtain a crossed module $(B,\mu,\partial)$ in $\crmod^n_\CP(A,M)$. 

\begin{defi}[Truncation]
Let $(B,\partial)$ be a positively graded dg vector space and $n\geq 1$. The $n$th truncation of $B$ is the dg vector space of length $(n+1)$:
\[
  \tau_{\leq n}(B)\coloneqq  
  0\to\coker(\partial_{n})
  \xrightarrow{\partial_{n-1}}
  B_{n-1}
  \xrightarrow{\partial_{n-2}}
  \cdots
  \xrightarrow{\partial_1}
  B_1
  \xrightarrow{\partial_0}
  B_0.
\]
\end{defi}

\begin{rem}
The truncation is conceived in such a way that the natural projection from $(B,\partial)$ to $\tau_{\leq n}(B)$ induces an isomorphism in homology with degree $\leq n$. Observe that $\tau_{\leqslant n}(B)$ is the cokernel of the inclusion of the subcomplex $\cdots \xrightarrow{} B_{n+1} \xrightarrow{\partial_n} \mathrm{Im}(\partial_n) \xrightarrow{} 0 \rightarrow \cdots \rightarrow 0$ in $B$. Thus $\tau_{\leqslant n}(B)$ has the universal property of the cokernel. 
\end{rem}

\begin{lem}
Let $(B,\partial,\mu)$ be a positively graded dg $\CP$-algebra and $n\geq 1$. The $n$-th truncation $\tau_{\leq n}(B)$ inherits a canonical $\CP$-algebra structure.  
\end{lem}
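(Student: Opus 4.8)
The plan is to realize $\tau_{\leq n}(B)$ as the quotient of $B$ by a dg ideal and then to invoke the standard fact that such a quotient carries a $\CP$-algebra structure. Recall that a \emph{dg ideal} of a dg $\CP$-algebra $(B,\partial,\mu)$ is a graded subspace $I\subseteq B$ that is stable under $\partial$ and satisfies $\mu(b_1,\dots,b_k)\in I$ as soon as at least one input $b_j$ lies in $I$. Applying $\CP$ to the quotient map $q\colon B\to B/I$ gives a surjection $\CP(B)\to\CP(B/I)$ whose kernel is the span of the operations having at least one entry in $I$ (right-exactness of $-\otimes_{\mathbb{S}_n}-$); hence $\mu$ descends along $\CP(q)$ to a $\CP$-algebra structure on $B/I$, the unique one for which $q$ is a morphism of $\CP$-algebras, and the construction is functorial. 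Now, as observed in the Remark just above, $\tau_{\leq n}(B)$ is exactly $B/S$, where $S\subseteq B$ is the subcomplex equal to $B_m$ in degrees $m\geq n+1$, equal to $\mathrm{Im}(\partial_n)\subseteq B_n$ in degree $n$, and zero in degrees $<n$. So everything comes down to checking that $S$ is a dg ideal of $B$.

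First I would note that $S$ is indeed stable under $\partial$: the differential maps $B_{n+1}$ onto $\mathrm{Im}(\partial_n)$, it annihilates $\mathrm{Im}(\partial_n)$ since $\partial^2=0$, and in degrees $\geq n+1$ it stays inside $S$. For the ideal condition, take $b_1,\dots,b_k\in B$ with some $b_i\in S$; as $\CP$ is concentrated in degree $0$, the element $\mu(b_1,\dots,b_k)$ has degree $\sum_j|b_j|$. If $b_i$ has degree $\geq n+1$, or if $b_i\in\mathrm{Im}(\partial_n)$ but some other input has positive degree, or if two of the inputs lie in $S$, then this degree is $\geq n+1$ and the output lands in $B_{\geq n+1}\subseteq S$. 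The one remaining case is that exactly one input, say $b_i\in\mathrm{Im}(\partial_n)$, has degree $n$ while every other $b_j$ lies in $B_0$; then the output sits in degree $n$ and I must show that it belongs to $\mathrm{Im}(\partial_n)$.

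This last case is settled by the computation already performed in the proof of \cref{proposition_crmod}. Writing $b_i=\partial(\widetilde b)$ with $\widetilde b\in B_{n+1}$ and using that $\mu$ is a chain map,
\[
  \mu(b_1,\dots,\partial\widetilde b,\dots,b_k)
  =\partial\,\mu(b_1,\dots,\widetilde b,\dots,b_k)
  \;\pm\sum_{j\neq i}\mu(b_1,\dots,\partial b_j,\dots,\widetilde b,\dots,b_k),
\]
and the right-hand sum vanishes because $B$ is positively graded, so $\partial b_j=0$ for each $b_j\in B_0$. Hence $\mu(b_1,\dots,b_i,\dots,b_k)=\pm\,\partial\,\mu(b_1,\dots,\widetilde b,\dots,b_k)$, and since $\mu(b_1,\dots,\widetilde b,\dots,b_k)\in B_{n+1}$ this lies in $\mathrm{Im}(\partial_n)\subseteq S$, as wanted. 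This proves that $S$ is a dg ideal, so $\tau_{\leq n}(B)=B/S$ inherits a canonical $\CP$-algebra structure. I expect this degree-$n$ case to be the only point of substance — it is the operadic analogue of the Peiffer identity, already used above — whereas the identification $\tau_{\leq n}(B)=B/S$, the stability of $S$ under $\partial$, and the descent of $\mu$ to the quotient are all formal.
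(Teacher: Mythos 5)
Your proof is correct and rests on the same two ingredients as the paper's own argument: the degree count coming from $\CP$ being concentrated in degree $0$, and the chain-map identity $\mu(b_1,\dots,\partial\widetilde b,\dots,b_k)=\pm\,\partial\,\mu(b_1,\dots,\widetilde b,\dots,b_k)$, the remaining terms vanishing because the other inputs lie in $B_0$. Packaging this as a single quotient by the dg ideal $S$, rather than the paper's two-step passage through the naive truncation $\sigma_{\leq n+1}(B)$ followed by descent to $\coker(\partial_n)$, is only a cosmetic difference.
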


\begin{proof} 
Let $(B,\partial,\mu)$ be a dg $\CP$-algebra and $n\geq 1$. The {\it naive $(n+1)$st truncation} $\sigma_{\leq n+1}(B)$ defined by 
\[
  \sigma_{\leq n+1}(B)\coloneqq 0\to B_{n+1}
  \xrightarrow{\partial_{n}}
  B_{n}
  \xrightarrow{\partial_{n-1}}
  \cdots
  \xrightarrow{\partial_1}
  B_1
  \xrightarrow{\partial_0}
  B_0
\]
has a canonical structure of a $\CP$-algebra given by the composition of $\mu:\CP\to\End_B$ with restriction and projection to $\sigma_{n+1}(B)$. As the operad $\CP$ is concentrated in degree $0$, observe that we have 
\[
  \CP(\sigma_{n+1}(B))_{n+1}\cong\CP(B_0;B_{n+1}).
\]
Denote by $\pi:B_n\to\coker(\partial_n)$ the canonical projection. The first square of the following diagram
\[
  \begin{tikzcd}
    \CP(B_0;B_{n+1})
    \arrow[r, "\mu"]
    \arrow[d, "\CP(B_0;\partial_{n})"']
    & B_{n+1}
    \arrow[d,"\partial_n"] 
    \\
    \CP(B_0;B_{n}) 
    \arrow[r, "\mu"]
    \arrow[d, "\CP(B_0;\pi)"']
    &
    B_n
    \arrow[d,"\pi"]
    \\
    \CP(B_0;\coker(\partial_n))
    \arrow[r,"\widetilde{\mu}"]
    &
    \coker(\partial_n)
  \end{tikzcd}  
\]
commutes by definition, and thus the induced map $\tilde{\mu}$ exists. In this way, the $\CP$-algebra structure on $\sigma_{\leq n+1}(B)$ induces a canonical $\CP$-structure on the truncation $\tau_{\leq n}(B)$. 
\end{proof}

Summing up, the Rectification Theorem together with the truncation lead us to the dg $\CP$-algebra
$\tau_{\leq n}(\Omega_\alpha \Bar_\iota V)$. The dg $\CP$-algebra $\tau_{\leq n}(\Omega_\alpha \Bar_\iota V)$ is a crossed module in $\crmod^n_\CP(A,M)$. It is by construction concentrated in degrees $0,\ldots,n$. It is a dg $\CP$-algebra quasi-isomorphic to $V=A\ltimes M[n]$ by the RT. Let us show how to obtain the other ingredients of an $n$-crossed module. In fact, the $\infty$-quasi-isomorphism $i_\infty$ goes in the "wrong" direction:
\[ 
  \begin{tikzcd}
  V=A\ltimes M[n] \arrow[r,squiggly,"i_\infty"] & \tau_{\leq n}(\Omega_\alpha \Bar_\iota V)  
  \end{tikzcd}
\]
The morphism we need is its $\infty$-quasi-inverse $p_\infty$. It is clear that the first component of the morphism $i_\infty$ is just the inclusion of $A$ into $B:=\tau_{\leq n}(\Omega_\alpha \Bar_\iota V)$, and thus the first component $p$ of its $\infty$-quasi-inverse $p_\infty$ corestricts to a morphism of dg $\CP$-algebras $\pi: B\to A$ such that ${\rm proj}_A\,\,p=\pi$. It is less clear that the map $p$, restricted to $\ker(\pi)$, is a morphism of $B$-modules.

\begin{prop}
The map $p$, restricted to $\ker(\pi)$, is a morphism of $B$-modules.
\end{prop}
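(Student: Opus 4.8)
The plan is to show that the $\infty$-quasi-inverse $p_\infty$, built via the Rectification Theorem applied to the transferred $\CP_\infty$-structure $(V,\widetilde\mu_A,\widetilde\mu_M,\widetilde\mu_{n+1})$, has the property that its first component $p\colon B=\tau_{\leq n}(\Omega_\alpha\Bar_\iota V)\to V$ restricts on $\ker(\pi)$ to a map of $B$-modules. First I would isolate what $\ker(\pi)$ is: since ${\rm proj}_A\,p=\pi$ and $V=A\oplus M[n]$, the map $p$ has components $p_A\colon B\to A$ (which equals $\pi$) and $p_M\colon B\to M[n]$. The module structure of $M[n]$ over $B$ is via $\pi$ and the $A$-action $\gamma_M$, so ``$p$ restricted to $\ker(\pi)$ is a $B$-module map'' unpacks to the equation $p_M(\mu(b_1,\dots,b_k,x)) = \gamma_M(\pi(b_1),\dots,\pi(b_k),p_M(x))$ for $b_i\in B_0$ and $x\in\ker(\pi)$, where $\mu$ is the $\CP$-algebra structure on $B$ coming from the truncation.

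**Second**, I would trace this equation back through the construction. The $\CP$-algebra structure on $B=\tau_{\leq n}(\Omega_\alpha\Bar_\iota V)$ is a rectification, so $p_\infty$ is an honest $\infty$-morphism of $\CP_\infty$-algebras $B\leadsto V$, and the $\infty$-morphism relation $\partial p_\infty = p_\infty\rhd\mu^B - \widetilde\mu\lhd p_\infty$ holds. The key point, exactly as in the ``$2^{\rm nd}$ step'' Lemma \ref{lemma_cohomologuous}, is to look at this relation in the appropriate degree and observe that the degree-$0$ component $(p_\infty)_0$ of $p_\infty$ equals the identity on $V$ under the identifications — this follows from the explicit tree formulae for the Rectification $\infty$-quasi-isomorphism and its inverse (via \cite[Theorem 10.4.1]{Loday:2012}), together with the fact that $i_\infty$'s first component is the inclusion $A\hookrightarrow B$. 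Granting $(p_\infty)_0=\id_V$, the relevant component of the $\infty$-morphism equation, restricted to inputs in $B_0$ and one input in $\ker(\partial_n)$ and landing in $M[n]$, becomes precisely the $B$-module compatibility for $p_M$, because the only surviving term on the right-hand side is the one feeding $\widetilde\mu_M\circ(\id;(p_\infty)_0)$ against the structure map $\mu^B$, all higher tree-terms vanishing for degree reasons (the inputs have degree $0$, the single special input has degree $n$, and $\CC$ is connected — the same degree count that in the $1^{\rm st}$ step forced only three nonzero components of $\overline\mu$).

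**Third**, I would make precise the compatibility with the truncation: the module action on $\ker(\pi)$ inside $B$ is the one induced on $\coker(\partial_n)$ (the top term of $\tau_{\leq n}$), which by the previous Lemma's diagram agrees with the naive-truncation action, and $p_M$ factors through this because $p_M$ has no components hitting $B_{n+1}$ or beyond (those got truncated). So it suffices to check the identity on $\sigma_{\leq n+1}(\Omega_\alpha\Bar_\iota V)$ before truncating, where $p_\infty$ is literally the Rectification inverse; then pass to the quotient using that everything is a $\CP$-algebra morphism and the projection $\sigma_{\leq n+1}\to\tau_{\leq n}$ is one too.

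**The main obstacle** I anticipate is the bookkeeping around $(p_\infty)_0=\id_V$: one must verify this not for the HTT $\infty$-quasi-isomorphisms (where it is essentially by construction) but for the Rectification $\infty$-quasi-isomorphism $\Omega_\alpha\Bar_\iota V\to V$ and, crucially, for its $\infty$-quasi-inverse, whose explicit form requires the recursive construction of \cite[Theorem 10.4.1]{Loday:2012}. Once that normalization is in hand, the rest is a degree-counting argument identical in spirit to the proof of Lemma \ref{lemma_cohomologuous}, so I would structure the proof as: (i) recall $p$ has components $\pi=p_A$ and $p_M$, and spell out the target module-map equation; (ii) invoke the $\infty$-morphism relation for $p_\infty$ and the normalization $(p_\infty)_0=\id_V$; (iii) extract the relevant degree component and match it, term by term (most terms vanishing by connectivity of $\CC$ and the degree constraint), against the desired equation; (iv) note compatibility with truncation. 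I would not grind through the tree combinatorics — the paper's earlier steps already exhibit the pattern — but I would be explicit about which degree component of the $\infty$-relation is being read off and why the higher components contribute nothing.
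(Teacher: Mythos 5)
Your overall strategy coincides with the paper's: both read off the $B$-module compatibility of $p|_{\ker(\pi)}$ from the defining relation of the $\infty$-morphism $p_\infty : B \leadsto V$ (you use the twisting-morphism form $\partial p_\infty = p_\infty \rhd \mu^B - \widetilde{\mu}\lhd p_\infty$, the paper the equivalent commuting square $\hat{p}\,\delta_B = \delta_V\,\hat{p}$ of $\CC$-coalgebra maps), evaluated on inputs $b_1,\dots,b_k\in B_0$ and one input in $\ker(\pi)$, with the remaining terms killed by degree considerations and connectedness of $\CC$. Two points, however, need repair before this is a proof.

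First, the normalization you lean on, ``$(p_\infty)_0=\id_V$'', is not correct as stated: the zeroth component of $p_\infty : B\leadsto V$ is the chain map $p\colon B\to V$ itself, a quasi-isomorphism between two different complexes, not an identity. (The identity statement $Q_0=\id_V$ in the paper's second step concerns the composite $p^C_\infty\, q\, i^B_\infty : V\leadsto V$; here there is no such composite, only $p\,i=\id_V$.) What the argument actually requires --- and what the paper invokes --- is that the components of $p_\infty$ other than $p$ cannot send the relevant elements into $M[n]$, so that on both sides of the $\infty$-morphism relation only the terms built from $p$ survive; the resulting identity then reads $p(\gamma_B(u)(b_1,\ldots,b_k;z))=\mu_{A;M}(u)(\pi(b_1),\ldots,\pi(b_k),p(z))$ with $\pi={\rm proj}_A\,p$, and there is no step at which a component of $p_\infty$ gets replaced by an identity. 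Second, and more seriously, the degree count only yields this identity for operations of the form $u=\alpha(c)$ with $c\in\CC$, because in both $p_\infty\rhd\mu^B$ and $\widetilde{\mu}\lhd p_\infty$ the cooperation is fed through the twisting morphism $\alpha$ before acting. Since $\alpha:\CC\to\CP$ need not be surjective, you still owe an argument that this suffices; the paper supplies it by observing that $H_\bullet(\Omega\CC)\cong\CP$ forces the image of $\alpha$ to generate $\CP$ as an operad, so the identity for operations in $\im(\alpha)$ propagates to all of $\CP$ by operadic composition. Without this last step the proof is incomplete.
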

\begin{proof}
The $\infty$-morphism $p_\infty:B \leadsto V$  corresponds to a map of coalgebras
$\hat{p}:\CC(B)\to\CC(V)$ such that  
\begin{equation}    \label{D}
  \begin{tikzcd} 
  \CC(B) \arrow[r,"\hat{p}"] \arrow[d,"\delta_B"] & \CC(V) \arrow[d,"\delta_V"]  \\
  \CC(B) \arrow[r,"\hat{p}"] & \CC(V) ,  
  \end{tikzcd}
\end{equation}
where $\delta_B$ and $\delta_V$ are the codifferentials expressing the $\CP_\infty$-algebra structures on $B$ and $V$. 
By construction, the composition of $\delta_V$ with the projection onto $V$ is the $\CP_\infty$-algebra structure $\overline{\mu}$ on $V$. 

The map $\overline{\mu}$ from $\CC(V)$ to $V$ has three (possibly non trivial) components: 
\[(\gamma_A+\gamma_{A;M})\,\alpha +\widetilde{\mu}_{n+1}:\CC(A)\oplus\CC(A;M[n])\too A\ltimes M[n],\] 
i.e. the $\CP$-algebra structure on $A$, the $A$-module structure on $M$ and the cocycle $\widetilde{\mu}_{n+1}$. 

In order to select the module structure, we take elements 
$c\otimes b_1\otimes\ldots\otimes b_k\otimes z\in\CC(B)$ for $c\in\CC$, $b_1,\ldots,b_k\in B_0$ and $z \in\ker(\pi)$. 
We want to show the equality
\begin{equation}   \label{***}
  p(\gamma_B(u)(b_1,\ldots,b_k;z)) = \mu_{A;M}(u)(\pi(b_1)\ldots,\pi(b_k),p(z)),
\end{equation}
for all $u\in\CP$, all $b_1,\ldots,b_k\in B_0$ and all $z\in \ker(\pi)$. We claim that this equality comes from the commutativity of the above diagram \eqref{D}. 

The element $c\otimes b_1\otimes\ldots\otimes b_k\otimes z$ is sent by $\hat{p}$ to $\alpha(c)\otimes p(b_1)\otimes\ldots\otimes p(b_k)\otimes p(z)$, and then by the module structure to $\mu_{A;M}(\alpha(c))({\rm proj}\,p(b_1)\otimes\ldots\otimes {\rm proj}\,p(b_k)\otimes p(z))$, where ${\rm proj}:A\oplus M[n]\to A$ is the projection. 
But $\pi={\rm proj}\,\,p$, thus we obtain $\mu_{A;M}(\alpha(c))(\pi(b_1)\ldots,\pi(b_k),p(z))$. 

On the other hand, the element $c\otimes b_1\otimes\ldots\otimes b_k\otimes z$ is sent by $\delta_B$ to 
$\gamma _B(\alpha(c))(b_1,\ldots,b_k,z)$, and then by $p$ to $p(\gamma_B(\alpha(c))(b_1,\ldots,b_k);z)$.
The fact that other components of $p_\infty$ cannot send elements into $M$ shows that both elements must be equal, i.e. we have shown Equation \eqref{***} for elements $u$ of the form $\alpha(c)$ for a $c\in\CC$. 

Notice that the twisting morphism $\alpha:\CC\to\CP$ is not necessarily surjective. But the fact that $H_\bullet(\Omega\CC)\cong\CP$ implies that the $\alpha$-image of $\CC$ generates $\CP$. This is enough to conclude the proof of Equation \eqref{***}. 
\end{proof}

\medskip

\subsection*{\bf $4^{\rm th}$ step:}  Passage to the quotient. 

Here we show that two cocycles which differ by a coboundary give rise to equivalent crossed modules. The two cocycles give rise to $\CP_\infty$-algebra structures $(A\oplus M[n],\mu)$ and $(A\oplus M[n],\mu')$ on $V\coloneqq A\oplus M[n]$. By the Rectification Theorem, the $\CP_\infty$-algebras $(V,\mu)$ and $(V,\mu')$ give rise to dg $\CP$-algebras $B\coloneqq \tau_{\leq n}(\CP\circ\CC(V,\mu))$ and  $B'\coloneqq \tau_{\leq n}(\CP\circ\CC(V,\mu'))$ respectively, which are $\infty$-quasi-isomorphic to $(V,\mu)$ and $(V,\mu')$ respectively via $\infty$-quasi-isomorphisms $\phi:(V,\mu)\to B$ and  $\phi':(V,\mu')\to B'$.  

We want to translate the coboundary, which is the difference of the two cocycles, into an $\infty$-perturbation of the identity map $\id_V:V\to V$, i.e. into an $\infty$-morphism $q$ whose $0$th component is $\id_V$ and where there is one other possibly non-zero component in degree $n$ given by the coboundary. In fact, that this defines indeed an $\infty$-morphism $q$ follows directly from the reasoning of the proof of Lemma \ref{lemma_cohomologuous}: the fact that the coboundary $\partial_{\pi_\alpha}(q_n)$ is the difference between the two cocycles $\zeta^1_{n+1}$ and $\zeta^2_{n+1}$ becomes translated into  $\partial q=q\rhd \zeta^1-\zeta^2\lhd q$, i.e. that $q$ is an $\infty$-morphism according to \cite[Theorem 10.2.3]{Loday:2012}.  

Our situation is now:
\[
  \begin{tikzcd}
  B=\tau_{\leq n}(\CP\circ\CC(V,\mu))  & B'=\tau_{\leq n}(\CP\circ\CC(V,\mu')) \\
(V,\mu) \arrow[u,squiggly,"\phi"] \arrow[r,squiggly,"q"] & (V,\mu') \arrow[u,squiggly,"\phi'"]  
  \end{tikzcd} .
\] 

The $\infty$-quasi-isomorphisms $\phi$ and $\phi'$ have $\infty$-quasi-inverses $\psi$ and $\psi'$ respectively, thus we may define an $\infty$-morphism $Q$ rendering commutative the following diagram:
\[
  \begin{tikzcd}
  B=\tau_{\leq n}(\CP\circ\CC(V,\mu))   \arrow[d,squiggly,"\psi"] \arrow[r,squiggly,"Q"]  & B'=\tau_{\leq n}(\CP\circ\CC(V,\mu')) \\
(V,\mu) \arrow[r,squiggly,"q"] & (V,\mu') \arrow[u,squiggly,"\phi'"]  
  \end{tikzcd} .
\] 
 
By construction, the $\infty$-morphism $Q$ is an $\infty$-quasi-isomorphism. Recall from  \cite[Theorem 11.4.9]{Loday:2012}, $\infty$-quasi-isomorphisms between dg $\CP$-algebras can be reduced to a zig-zag $B\gets \bullet\to B'$ of quasi-isomorphisms of dg $\CP$-algebras. Note that this theorem also holds in our more general setting where $\CP$ is not necessarily a Koszul operad. The proof of this theorem shows that the zig-zag $B\gets \bullet\to B'$ is more precisely
\[
B\xleftarrow{\sim}
\Omega_\alpha \Bar_\iota(B)
\xrightarrow{\sim}
\Omega_\alpha \Bar_\iota(B')
\xrightarrow{\sim}
B'.
\]

This zig-zag induces an equivalence of crossed modules $B\to B'$. Indeed, it suffices to show that the above  $\infty$-quasi-isomorphisms induce morphisms of crossed modules which are the identity on $A$ and $M$, because the equivalence of crossed modules is the equivalence relation generated by these morphisms. 
This is clear first of all for $q:B\leadsto B'$ by construction (as it is a perturbation of the identity on $V=A\oplus M[n]$) and then it is true in each step of the construction of the zig-zag, as $\Bar_\iota(B)=\CC(B)$ is the cofree conilpotent $\CC$-coalgebra on $B$ and $\Omega_\alpha(\Bar_\iota(B))=\CP(\Bar_\iota(B))$ is the free $\CP$-algebra on $\Bar_\iota(B)$.   

\medskip

\subsection*{\bf $5^{\rm th}$ step:}  The two compositions give the identity. 

It is clear that starting with a cocycle, constructing the crossed module and then extracting the cocycle again gives the identity. Conversely, as the crossed module is given by degree reasons essentially by the cocycle map $\tilde{\mu}_{n+1}$, it is clear that starting from a crossed module, extracting the cocycle and then constructing a crossed module also gives the identity. 

This ends the proof of the natural bijection in Theorem \ref{main_theorem}. 


\section{Baer sum of crossed modules}

In this section, we define the Baer sum of crossed modules and show that the bijection of Theorem \ref{main_theorem} is additive and gives thus an isomorphism of abelian groups. 

As before, let $\CP$ be an operad in vector spaces over $\field$, $A$ be a $\CP$-algebra and $M$ be an $A$-module, and let $p:B\to A\oplus M[n]$ be an $n$-crossed module over $(A,M)$. This means
in particular that the postcomposition of $p$ the canonical projection
$A\oplus M[n]\to A$, denoted by $\pi:B\to A$, is a morphism of
$\mathcal{P}$-algebras and that the restriction of $p$ to $\mathrm{Ker}(\pi)$
is a morphism of $B$-modules.

\begin{lem}  \label{lemma_module_structure}
Let $(i,h)$ be transfer data. Then, the restriction of $i$ to $M[n]$ is a
morphism of $B$-modules.
\end{lem}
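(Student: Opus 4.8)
The plan is to reduce the claim to the already-established fact that $p|_{\Ker(\pi)}$ is a morphism of $B$-modules, using that $p$ and $i$ are mutually inverse up to the homotopy $h$ and that, for degree reasons, $h$ cannot interfere with the module action in the relevant degree. First I would fix the setup: write $V = A\oplus M[n]$, recall that $M[n]$ is concentrated in degree $n$, that $V\cong H_\bullet(B,\partial_B)$, and that $i$ is a section of $p$ with $i\,p - \id_B = \partial_B h + h\,\partial_B$. The $B$-module structure on $M[n]$ is by definition the one transported from $\Ker(\pi)$ along the isomorphism $p|_{\Ker(\pi)}\colon \Ker(\pi)_n \xrightarrow{\cong} M[n]$ of Proposition \ref{proposition_crmod}; so concretely, for $u\in\CP$, $b_1,\dots,b_k\in B_0$ and $m\in M[n]$, the action is $u\cdot(b_1,\dots,b_k;m) = p\bigl(\mu(u)(b_1,\dots,b_k; i_0(m))\bigr)$ where $i_0\colon M[n]\to\Ker(\partial_n)$ is a chosen inverse to $p|_{\Ker(\partial_n)}$; but the content of the lemma is precisely that the restriction of the transferred section $i$ to $M[n]$ does the job, i.e. that $i|_{M[n]} = i_0$ lands in $\Ker(\pi)$ and intertwines the two actions.

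The key steps are then: (1) observe that $i|_{M[n]}$ does land in $\Ker(\pi) \subseteq B_n$: indeed $\pi\,i = \mathrm{pr}_A\,p\,i = \mathrm{pr}_A|_{M[n]} = 0$, and $i$ preserves degree so $i(M[n])\subseteq B_n$; moreover $\partial_n\,i|_{M[n]} = i|_{M[n-1]}\,\partial^V_n = 0$ since the differential on $V$ vanishes, so in fact $i(M[n])\subseteq \Ker(\partial_n)\subseteq \Ker(\pi)$. (2) Check that $i|_{M[n]}$ agrees with the inverse of $p|_{\Ker(\partial_n)}$ used to define the $B$-module structure: this is immediate from $p\,i = \id_V$, so that $p|_{\Ker(\partial_n)}$ and $i|_{M[n]}$ are mutually inverse isomorphisms. (3) Conclude: since $p|_{\Ker(\pi)}$ is a morphism of $B$-modules by hypothesis and $i|_{M[n]}$ is its inverse, $i|_{M[n]}$ is automatically a morphism of $B$-modules as well — the inverse of an isomorphism of modules is again a module map. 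Here one uses that, for degree reasons, the action of $B$ on $\Ker(\partial_n)$ factors through $B_0$ (this was shown in the proof of Proposition \ref{proposition_crmod}), so there is no ambiguity coming from higher-degree components of $B$.

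I would spell out step (3) slightly: given $u\in\CP$, $b_1,\dots,b_k\in B_0$, $m\in M[n]$, we must show $i\bigl(u\cdot(b_1,\dots,b_k;m)\bigr) = \mu(u)\bigl(b_1,\dots,b_k; i(m)\bigr)$ in $\Ker(\partial_n)$. Apply $p|_{\Ker(\partial_n)}$, which is injective: the left side becomes $u\cdot(b_1,\dots,b_k;m)$ by $p\,i=\id$, and the right side becomes $p\bigl(\mu(u)(b_1,\dots,b_k;i(m))\bigr) = u\cdot(b_1,\dots,b_k; p\,i(m)) = u\cdot(b_1,\dots,b_k;m)$ using precisely that $p|_{\Ker(\pi)}$ is a $B$-module map and $p\,i = \id_V$. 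Hence the two sides agree.

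The main obstacle — really the only subtlety — is step (1): one must make sure the section $i$ produced together with the homotopy $h$ can be (or has been) chosen so that its restriction to the top-degree summand $M[n]$ actually lands in $\Ker(\partial_n)$ rather than merely in $B_n$. This is where the hypothesis that $\CP$, $A$ and $M$ are concentrated in degree $0$ and that $B$ is a genuine length-$n$ complex is used: since $\partial^V\equiv 0$ on $V$ and $i$ is a chain map, $\partial_n\, i|_{M[n]} = 0$ automatically, so no extra choice is needed. Once this is in place the rest is the purely formal observation that the inverse of a module isomorphism is a module map, together with the degree argument (already in the proof of Proposition \ref{proposition_crmod}) that the $B$-action on $\Ker(\partial_n)$ only sees $B_0$.
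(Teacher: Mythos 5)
Your proof is correct, and the skeleton is the same as the paper's: reduce to arguments $b_1,\dots,b_k\in B_0$ (an entry of positive degree kills both sides for degree reasons), then derive the compatibility of $i|_{M[n]}$ with the action from the hypothesis that $p|_{\Ker(\pi)}$ is a morphism of $B$-modules together with $p\,i=\id_V$. The one place where you genuinely diverge is the mechanism of the final deduction. The paper applies the homotopy formula $\id_B-i\,p=\partial\,h+h\,\partial$ to the element $\mu(b_1,\dots,b_k,i(m))$ and checks that both homotopy terms vanish ($h$ raises degree into $B_{n+1}=0$, and $\partial$ of the product is zero by the derivation property), concluding $\mu(b_1,\dots,b_k,i(m))=i\,p(\mu(b_1,\dots,b_k,i(m)))$ and then rewriting the right-hand side using the module property of $p$. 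You instead never touch $h$: you note that both sides of the desired identity lie in $\Ker(\partial_n)$, on which $p$ is injective (it computes $H_n$, and there are no boundaries in top degree), and compare images under $p$. The underlying computation is the same in both versions --- the verification that $\partial_n\bigl(\mu(u)(b_1,\dots,b_k;i(m))\bigr)=0$ is exactly what the paper uses to kill the $h\,\partial$ term and what you need to place the right-hand side in $\Ker(\partial_n)$ --- so the content is identical; your route buys independence from the homotopy $h$ at the cost of invoking the bijectivity of $p|_{\Ker(\partial_n)}$. The only point you should spell out rather than leave implicit is precisely that cycle condition on $\mu(u)(b_1,\dots,b_k;i(m))$ (it follows since $\partial b_j=0$ for $b_j\in B_0$ and $\partial_n\,i(m)=0$, $i$ being a chain map into a complex with zero differential on the target side); without it the appeal to injectivity of $p|_{\Ker(\partial_n)}$ does not apply.
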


\begin{proof}
The fact that $(i,h)$ is transfer data means that $i$ is a section of $p$
(in the category of complexes), i.e. $p\,i=\id$, and
\[
  \id_B-i\,p=\partial\,h+h\,\partial.
\]
Let $b_1,\ldots,b_n\in B$ and $m\in M[n]$. We may suppose that the elements
$b_k$ are homogeneous.
We want to show that
$i(\widetilde{\rho}(b_1,\ldots,b_n;m))=\mu(b_1,\ldots,b_n,i(m))$, where
$\widetilde{\rho}$ is the $B$-module structure on $M$ and $\mu$ is the product
in the $\CP$-algebra $B$.
By definition of the $B$-module
structure on $M$, this means
\begin{equation}    \label{action}
i(\rho(\pi(b_1),\ldots,\pi(b_n);m))=\mu(b_1,\ldots,b_n,i(m)),
\end{equation}  
where the action $\rho$ on the LHS is the $A$-module structure on $M$.

\begin{description}[leftmargin = 0em]
\item[\bf $1^{\rm st}$ case] suppose there exists $k\in \llbracket 1, n\rrbracket$ such that $\deg(b_k)\geqslant 1$.
This implies that
$\pi(b_k)=0$, and thus the LHS of (\ref{action}) is zero. On the other hand,
$\deg(i(m))=n$ and thus $\deg(\mu(b_1,\ldots,b_n,i(m)))>n$, which means that
the RHS is zero, too.

\item[\bf $2^{\rm nd}$ case] suppose that for all $k \in\llbracket 1, n \rrbracket$, $\deg(b_k)=0$. 
The homotopy formula for
$i\,p$, applied to $\mu(b_1,\ldots,b_n,i(m))$, reads:
$$i\, p(\mu(b_1,\ldots,b_n,i(m)))=\mu(b_1,\ldots,b_n,i(m)) - \partial\,
h(\mu(b_1,\ldots,b_n,i(m)))-h\,\partial(\mu(b_1,\ldots,b_n,i(m))).$$
The term $h(\mu(b_1,\ldots,b_n,i(m)))$ is zero, because
$\mu(b_1,\ldots,b_n,i(m))$ is of degree $n$
and the homotopy $h$ raises the degree. The term
$\partial(\mu(b_1,\ldots,b_n,i(m)))$ is
zero by the following small computation. The differential $\partial$ is a derivation of
the product in $B$, therefore
$$\partial(\mu(b_1,\ldots,b_n,i(m)))=\sum_{k=1}^n\pm\mu(b_1,\ldots,\partial b_k,
\ldots,b_n,i(m))\pm \mu(b_1,\ldots,b_n,\partial\,i(m)).$$
Here the first sum is zero, because the degree of the $b_k$ is zero and
$\partial$ lowers degree. The last term is also zero, because $i$ is a
morphism of complexes (implying $\partial(i(m))=i(\partial(m))$) and the
differential on the complex $A\ltimes M[n]$ is zero. In conclusion, all terms
involving the homotopy $h$ are zero. The remaining terms read
$$i\, p(\mu(b_1,\ldots,b_n,i(m)))=\mu(b_1,\ldots,b_n,i(m)).$$
The LHS becomes
$i\, p(\mu(b_1,\ldots,b_n,i(m)))=
i(\rho(\pi(b_1),\ldots,\pi(b_n);p\,i(m)))=i(\rho(\pi(b_1),\ldots,\pi(b_n);m))$,
because $p$ is a morphism of $B$-modules and $p\,i=\id$. Thus we have shown
$$i(\rho(\pi(b_1),\ldots,\pi(b_n);m))=\mu(b_1,\ldots,b_n,i(m)),$$
which is \eqref{action}. 
\end{description}
\end{proof}  

Let $q:C\to A\oplus M[n]$ be another $n$-crossed module over $(A,M)$. Then,
let $(i,h)$ and $(j,k)$ be transfer data for $B$ and $C$ respectively and
define $D$ to fit in the following pushout square (in the category of chain
complexes over $A$): 
\[
  \xymatrix{
    (A\oplus M[n])\times_A  (A\oplus M[n])\ar[r]^{\hspace{1.3cm} +}\ar[d]^{i\times j}
    & A\oplus M[n]\ar[d]   \\
    B\times_A C\ar[r] & D
   }
  \]

Observe that the pushout in the category of chain
complexes over $A$ is in fact the usual pushout in the category of chain
complexes:

\begin{lem}
Let
\[
  \xymatrix{
    (\pi_X:X\to A)\ar[r]\ar[d]
    & (\pi_C:C\to A) \ar[d]   \\
    (\pi_B:B\to A) \ar[r] & (\pi_D:D\to A)
   }
  \]
be the pushout in the category of chain complexes over the fixed chain
complex $A$. Then the chain complex $D$ is the pushout in the diagram
\[
  \xymatrix{
    X\ar[r]^\gamma \ar[d]^\beta
    & C \ar[d]   \\
    B \ar[r] & D\,.
   }
  \]
\end{lem}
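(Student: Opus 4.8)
The plan is to show that the forgetful functor from the category of chain complexes over $A$ to the category of chain complexes creates colimits; equivalently, that the underlying chain complex of the pushout computed in the slice category is the pushout computed in plain chain complexes. First I would recall why the forgetful functor $(\mathrm{Ch}/A) \to \mathrm{Ch}$ has a left adjoint (namely $X \mapsto (X \to 0 \to A)$, or rather the coproduct functor $X \mapsto (X \sqcup A \to A)$), hence preserves limits; but the relevant fact here is about colimits, so a cleaner approach is the standard one: a slice category $\mathcal{C}/A$ over any object $A$ has all colimits that $\mathcal{C}$ has, and the forgetful functor $\mathcal{C}/A \to \mathcal{C}$ preserves them, because a cocone under a diagram $F$ in $\mathcal{C}/A$ is exactly a cocone under the composite diagram in $\mathcal{C}$ together with a compatible map to $A$, and the universal such is built on the colimit of the composite diagram.

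Concretely, given the span $B \xleftarrow{\beta} X \xrightarrow{\gamma} C$ in $\mathrm{Ch}$ together with structure maps $\pi_B, \pi_X, \pi_C$ to $A$ making the triangles commute, I would form the pushout $D := B \sqcup_X C$ in plain chain complexes, with canonical maps $u: B \to D$ and $v: C \to D$. Then I would observe that $\pi_B$ and $\pi_C$ agree on $X$ (since $\pi_B \beta = \pi_X = \pi_C \gamma$), so by the universal property of $D$ there is a unique chain map $\pi_D: D \to A$ with $\pi_D u = \pi_B$ and $\pi_D v = \pi_C$. This makes $(D, \pi_D)$ an object of $\mathrm{Ch}/A$ together with maps from $(B,\pi_B)$ and $(C,\pi_C)$ in the slice category, and the square commutes there. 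For the universal property in $\mathrm{Ch}/A$: given any $(E, \pi_E)$ with maps $f: B \to E$, $g: C \to E$ over $A$ satisfying $f\beta = g\gamma$, the plain pushout property of $D$ gives a unique $\varphi: D \to E$ with $\varphi u = f$, $\varphi v = g$; it remains only to check $\pi_E \varphi = \pi_D$, which follows since both composites agree after precomposition with $u$ and with $v$ (both give $\pi_B$, resp. $\pi_C$) and such a map out of $D$ is unique. Hence $(D,\pi_D)$ is the pushout in $\mathrm{Ch}/A$, and its underlying complex is exactly the plain pushout $D$.

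There is essentially no hard part here; the statement is a formal nonsense fact about slice categories, and the only thing to be careful about is keeping the bookkeeping of the structure maps straight and noting that everything takes place in an abelian (in particular, cocomplete) category so the pushouts in question exist. I would present the argument in the generality of an arbitrary category $\mathcal{C}$ with pushouts, specialized at the end to $\mathcal{C} = \mathrm{Ch}$, rather than redoing the explicit construction of the pushout of complexes as a cokernel. The one remark worth making explicitly is that in the application the base object is the chain complex $A$ concentrated in degree $0$, but the proof uses nothing special about $A$.
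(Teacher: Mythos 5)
Your proof is correct and follows essentially the same route as the paper, which likewise justifies the lemma by the general fact that colimits in a slice category $\mathcal{C}/A$ are created by the forgetful functor to $\mathcal{C}$ (the paper cites this and you prove it directly via the universal property). The only difference is that the paper additionally records the explicit quotient description $D=(B\oplus C)/\mathrm{Im}(\beta,-\gamma)$ of the pushout of chain complexes, which it then reuses in the proof of Theorem \ref{Baer_sum}; your abstract argument is complete as a proof of the lemma itself.
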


\begin{proof}
This is an instance of the general theorem stating that a colimit in an
over category is computed as a colimit in the underlying category, see
\cite{nlab}.

In our case, the pushout in the category of chain complexes can be described by
a quotient
$$D=(B\oplus C)/{\rm Im}(\beta,-\gamma),$$
(see \cite[Ch. III, Lemma 1.1]{HS}). The only
difference which could occur between the pushout in chain complexes over $A$
and in usual chain complexes is that in the quotient, one could restrict to
the images of $(\beta,-\gamma)$ whose projections $\pi_C$ and $\pi_B$ to $A$
coincide. But this is automatic in case the pushout square with the
projections to $A$ commutes. So we see also in our special case that the
two pushouts give the same.

Note that the pushout $D$ carries automatically a projection to $A$, induced
by the sum-map $\pi_B+\pi_C:B\oplus C\to A$. This map is zero on
${\rm Im}(\beta,-\gamma)$ and factors thus to a map $\pi_D:D\to A$ which
extends $\pi_B:B\to A$ and $\pi_C:C\to A$.
\end{proof}  

\begin{thm}  \label{Baer_sum}
The $\CP$-algebra structure on $B\times_A C$ induces a
$\CP$-algebra structure on $D$, which turns it into an $n$-crossed
module over $(A,M)$. Moreover, the equivalence class $[D]$ of $D$ does not
depend on the choices of transfer data, nor on the choice of $B$ and $C$ in
their equivalence classes of $n$-crossed modules.
\end{thm}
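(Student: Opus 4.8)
The plan is to realise $D$ explicitly as a quotient of the $\CP$-algebra $B\times_A C$ and then check the axioms of an $n$-crossed module over $(A,M)$ one at a time. Since the addition map $(A\oplus M[n])\times_A(A\oplus M[n])\xrightarrow{+}A\oplus M[n]$ is a split epimorphism, the pushout in the statement reduces to the quotient $D=(B\times_A C)/I$ with $I\coloneqq(i\times j)(\ker(+))=\{(i_M(m),-j_M(m)):m\in M[n]\}$, where $i_M\coloneqq i|_{M[n]}$, $j_M\coloneqq j|_{M[n]}$, and $\ker(+)$ is the copy of $M[n]$ spanned by the pairs $((0,m),(0,-m))$, concentrated in degree $n$. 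A first useful observation is that $i_M$ is not a free choice: since $i$ is a chain map and $A\oplus M[n]$ carries the zero differential, $i_M$ takes values in $Z_n(B)=\ker(\partial^B_n)$, on which $p^B$ restricts to an isomorphism onto $M[n]$; hence $i_M=(p^B|_{\ker\partial^B_n})^{-1}$, and likewise for $j_M$. Consequently $I$ --- hence $D$ together with all the structure we put on it below --- depends only on the crossed modules $B$, $C$ with their structure maps $p^B,p^C$, not on the chosen transfer data, which already settles the first independence claim.

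Next I would show that $I$ is both a subcomplex and a $\CP$-ideal of $B\times_A C$, so that $D$ inherits a dg $\CP$-algebra structure from $B\times_A C$. The subcomplex part is immediate from $\partial^{B\times_A C}(i_M m,-j_M m)=(\partial^B_n i_M m,-\partial^C_n j_M m)=0$. For the ideal part, a $\CP$-operation with two or more inputs in $I$ lands in degree $\geq 2n>n$ and so vanishes, and likewise any operation with one input in $I$ and another input of positive degree; and for an operation $\omega\in\CP$ with one input $(i_M m,-j_M m)\in I$ and remaining inputs $(b_1,c_1),\dots,(b_k,c_k)$ all of degree $0$, Lemma \ref{lemma_module_structure} applied to $B$ and to $C$ gives $\mu^B(\omega)(i_M m,b_1,\dots,b_k)=i_M(m')$ and $\mu^C(\omega)(-j_M m,c_1,\dots,c_k)=-j_M(m')$ for the common element $m'=\rho(\omega)(m,\pi_B b_1,\dots,\pi_B b_k)=\rho(\omega)(m,\pi_C c_1,\dots,\pi_C c_k)$ computed with the $A$-module structure $\rho$ on $M$ (the two expressions coincide because $(b_\ell,c_\ell)\in B\times_A C$ forces $\pi_B b_\ell=\pi_C c_\ell$), so the output is $(i_M m',-j_M m')\in I$. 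I expect this to be the main obstacle: it is the only place where the precise content of Lemma \ref{lemma_module_structure} is used, and one has to keep track of which slot carries the top degree rather than trusting a crude degree count.

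Then I would verify that $D\in\crmod^n_\CP(A,M)$. It is concentrated in degrees $0,\dots,n$ because $B\times_A C$ is. From the short exact sequence of complexes $0\to B\times_A C\to B\oplus C\to A\to 0$, whose last map is $(b,c)\mapsto\pi_B(b)-\pi_C(c)$, together with the fact that $B$ and $C$ have homology only in degrees $0$ and $n$, one reads off $\rmH_0(B\times_A C)\cong A$, $\rmH_n(B\times_A C)\cong M[n]\oplus M[n]$ and $\rmH_k=0$ otherwise; passing to $D=(B\times_A C)/I$, which only affects degree $n$, then gives $\rmH_0(D)\cong A$, $\rmH_n(D)\cong(M[n]\oplus M[n])/\{(m,-m)\}\cong M[n]$, and $\rmH_k(D)=0$ otherwise. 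For the comparison map I take $p_D\colon D\to A\oplus M[n]$ induced by $B\times_A C\xrightarrow{p^B\times p^C}(A\oplus M[n])\times_A(A\oplus M[n])\xrightarrow{+}A\oplus M[n]$; it kills $I$ because the $M[n]$-components of $p^B,p^C$ satisfy $p^B_M i_M=\id_{M[n]}=p^C_M j_M$, it realises the homology isomorphisms just computed and is therefore a quasi-isomorphism, it commutes with the projections to $A$ by construction (defining $\pi_D\colon D\to A$), and its restriction to $\ker(\pi_D)=(\ker\pi_B\oplus\ker\pi_C)/I$ is a morphism of $D$-modules because $p^B|_{\ker\pi_B}$ and $p^C|_{\ker\pi_C}$ are module morphisms, the addition $M[n]\oplus M[n]\to M[n]$ is $A$-linear, and every module structure over $B\times_A C$ occurring here is pulled back along $\pi_{B\times_A C}$. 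By Proposition \ref{proposition_crmod} this exhibits $D$ as an $n$-crossed module over $(A,M)$.

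Finally, for invariance of the class $[D]$ under the choice of $B$ and $C$ within their equivalence classes: since the equivalence of crossed modules is generated by elementary equivalences, it suffices to treat an elementary equivalence $\psi\colon B\to B'$ (a quasi-isomorphism of $\CP$-algebras with $p^{B'}\psi=p^B$) while keeping $C$ fixed, the $C$-variable being symmetric and one then composing along zig-zags. Then $\psi\times\id_C\colon B\times_A C\to B'\times_A C$ is a quasi-isomorphism of $\CP$-algebras over $A$, and since $p^{B'}\psi=p^B$ it carries the forced section $i_M$ to the forced section $i'_M$ of $B'$, hence carries $I$ isomorphically onto $I'$ and descends to a quasi-isomorphism of dg $\CP$-algebras $\bar\psi\colon D\to D'$ with $p^{D'}\bar\psi=p^D$, i.e.\ an elementary equivalence of crossed modules. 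Composing such equivalences yields $[D]=[D']$ for arbitrary representatives. The further properties implicit in calling this construction a ``sum'' --- commutativity, associativity, neutral element, inverses up to equivalence --- are not part of the present statement and would be established afterwards.
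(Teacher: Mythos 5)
Your proof is correct and follows the same overall strategy as the paper: realise the pushout as an explicit quotient and use Lemma \ref{lemma_module_structure} as the key input to show that the subspace being killed is a $\CP$-ideal, so that $D$ inherits the algebra structure. There is, however, one genuinely different (and in my view cleaner) sub-argument, plus a difference in level of detail, worth recording. For the independence of the transfer data, the paper argues via the universal property of the pushout, producing mutually inverse comparison maps between the two pushouts $D$ and $D'$ built from $(i,j)$ and $(i',j')$; you instead observe that the section $i$ is forced on $M[n]$ --- being a chain map it lands in $\ker(\partial_n^B)$, where $p^B$ restricts to an isomorphism onto $M[n]$, so $i_M=(p^B|_{\ker\partial_n^B})^{-1}$ --- whence the ideal $I=(i\times j)(\ker(+))$, and hence $D$ itself and not merely its isomorphism class, is independent of the choices. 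This also feeds cleanly into your last paragraph, where $p^{B'}\psi=p^B$ immediately gives $\psi\,i_M=i'_M$ and hence $(\psi\times\id)(I)=I'$. The other difference is that the paper does not actually verify that $D$ satisfies the axioms of an $n$-crossed module over $(A,M)$ (homology concentrated in degrees $0$ and $n$, the comparison quasi-isomorphism $p_D$, the $B$-module property of its restriction to $\ker\pi_D$), nor does it spell out the ``similar argument'' for invariance under equivalence; your proposal supplies both, via the two short exact sequences $0\to B\times_A C\to B\oplus C\to A\to 0$ and $0\to I\to B\times_A C\to D\to 0$. The only point I would ask you to make fully explicit is the degree bookkeeping in the ideal computation when the $I$-input sits in an arbitrary slot (handled by $\mathbb{S}$-equivariance) and the sign when pulling $-1$ out of the module slot of $\mu^C$; both are routine.
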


\begin{proof}
We have already mentioned that the pushout can be computed as a quotient, see
\cite[Ch. III, Lemma 1.1]{HS}). The quotient description is
$$D=((A\ltimes M[n])\oplus (B\times_A C))/{\rm Im}(-(i\times j),+).$$
Let us show that ${\rm Im}(-(i\times j),+)$ is an ideal in the $\CP$-algebra
$(A\ltimes M[n])\oplus (B\times_A C)$ (the product algebra !).
In fact, this follows immediately from
Lemma \ref{lemma_module_structure}.

Indeed, as the pushout is defined in the category of chain complexes over $A$,
it is enough to consider the $B\times C$-module structure, and to show that 
${\rm Im}(-(i\times j),+)$ is a submodule. This is clear for the map $+$, and 
the lemma tells
us that $i\times j$ is a morphism of $B\times C$-modules, thus its image is
a submodule.

The independence of the transfer data follows from the commutative diagram for
two different transfer data $(i,h)$, $(i',h')$ for $B$ and $(j,k)$, $(j',k')$
for $C$ :
\[
  \xymatrix{
    &(A\oplus M[n])\times_A  (A\oplus M[n])\ar[dl]_{i\times j} \ar[dr]^{i'\times j'}
    &   \\
    B\times_A C\ar[rr]^{(i'\times j')(p_B\times p_C)} & & B\times_A C  \,.
   }
\]
This commutative triangle shows that there is a unique arrow from $D$, the
pushout using $(i,j)$, to $D'$, the pushout using $(i',j')$, and vice-versa
by exchanging the roles of $D$ and $D'$.
Thus the two are canonically isomorphic.

The independence of the equivalence class of $D$ from the equivalence classes
of $C$ and $B$ is shown by a similar argument. 
\end{proof}  

\begin{defi}
The {\bf Baer sum} of $[B]$ and $[C]$ is $[D]$ and is denoted by $[B]+[D]$.
\end{defi}

\begin{prop} \label{proposition_additivity}
If the classes of $n$-crossed modules $[B]$ and $[C]$ correspond (under the map constructed in the proof of Theorem \ref{main_theorem}) to $n$-cocycles $c$ and $c'$, then $[B]+[C]$ corresponds to $c+c'$.
\end{prop}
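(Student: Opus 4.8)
The plan is to trace the Baer sum through the bijection of \cref{main_theorem} and verify that, at the level of transferred $\CP_\infty$-structures, it becomes addition of the top cocycle component. First I would reduce the statement to the following claim: if $p:B\to V$ and $q:C\to V$ are $n$-crossed modules over $(A,M)$ (with $V=A\oplus M[n]$) equipped with transfer data $(i,h)$, $(j,k)$, and $D$ is the pushout constructed in \cref{Baer_sum}, then there is transfer data on $D$ for which the HTT-transferred $\CP_\infty$-structure $\overline{\mu}^D$ has components $\overline{\mu}^D_A=\gamma_A$, $\overline{\mu}^D_M=\gamma_M$ (forced, as in the 1st step of the proof of \cref{main_theorem}) and $\overline{\mu}^D_{n+1}=\overline{\mu}^B_{n+1}+\overline{\mu}^C_{n+1}$. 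Since the map $\crmod^n_\CP(A,M)\to\rmZ^{n+1}_\CP(A,M)$ sends $[B]$ to $[\widetilde\mu^B_{n+1}]$ and is well defined on equivalence classes, this claim together with \cref{Baer_sum} gives the proposition.

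The key steps, in order: (1) Recall from the construction of $D$ that $D=\big(V\oplus(B\times_A C)\big)/\operatorname{Im}(-(i\times j),+)$, so there is a canonical quasi-isomorphism $\theta:B\times_A C\to D$ and $H_\bullet(D)\cong V$ via $\theta$ composed with the diagonal fibre-product projection; concretely $D_0\cong V_0=A$ (since $B_0\times_A C_0$ surjects onto $A$ with the relation identifying the two $M[n]$-copies only in top degree) and in top degree $D_n$ has homology $M$ with the induced class being the sum. (2) Choose transfer data $(i^D,h^D)$ on $D$: take the section $i^D:V\to D$ to be $\theta$ composed with $x\mapsto(i(x),j(x))\in B\times_A C$ — this is well defined because $i$ and $j$ agree on $A$ after projection — and build the homotopy $h^D$ from $h$ and $k$ through the pushout; the point is that this $i^D$ is still a chain-level section because $p^D\,i^D=\id_V$ follows from $p\,i=\id$ and $q\,j=\id$. (3) Run the HTT tree formula (\cref{HTT}) for $\overline{\mu}^D$: the components $\overline{\mu}^D_A$ and $\overline{\mu}^D_M$ depend only on the $\CP$-algebra $A$ and the $A$-module $M$ and hence coincide with those of $B$ and $C$ by the explicit formulas $\widetilde\mu_A=p(\gamma_A\circ\alpha)i^{\otimes\bullet}|_A$ etc. from the 1st step; the only remaining component $\overline{\mu}^D_{n+1}:\CC(A)\to M[n]$ is computed from trees with exactly one internal $h^D$-edge feeding the degree-$n$ output. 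Because the $\CP$-algebra structure on $D$ is the one induced on the quotient of the product algebra $V\times(B\times_A C)$, and because $i^D$ and $h^D$ split as "$i$ together with $j$" through the fibre product, every such tree for $D$ decomposes as a sum of the corresponding tree for $B$ and the corresponding tree for $C$ — the cross terms vanish since $\operatorname{Im}(-(i\times j),+)$ is an ideal and the homotopy only hits the $B\times_A C$ summand. Summing over trees gives $\widetilde\mu^D_{n+1}=\widetilde\mu^B_{n+1}+\widetilde\mu^C_{n+1}$.

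The main obstacle I expect is step (3): making the tree-wise decomposition of the HTT formula for the pushout $D$ precise. One must be careful that the homotopy $h^D$ on $D$ genuinely restricts to $h$ on the $B$-part and $k$ on the $C$-part of $B\times_A C$ modulo the ideal, and that the relation $(-(i\times j),+)$ does not introduce correction terms in the degree-$n$ output of a tree with a single $h^D$-edge; this is exactly where \cref{lemma_module_structure} (that $i\times j$ is a morphism of $B\times_A C$-modules) is used to see the ideal is stable under the operations appearing in the trees. An alternative, perhaps cleaner, route avoiding explicit trees: use that the $\infty$-morphisms are functorial, build an $\infty$-quasi-isomorphism $V\leadsto D$ through the $\infty$-quasi-isomorphisms $V\leadsto B$, $V\leadsto C$ and the additive structure of the pushout, and read off that its induced map on the top cocycle is the sum — then invoke the already-established independence of the cocycle class from the transfer data (the remark after \cref{lemma_cohomologuous}) and from the representatives of $[B],[C]$ (\cref{Baer_sum}). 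I would present the argument via this second route, falling back on the tree formula only where a direct computation is unavoidable.
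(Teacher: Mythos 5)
Your overall strategy---push the Baer sum through the homotopy transfer and identify the top component of the transferred $\CP_\infty$-structure on $V=A\oplus M[n]$ with the sum of the two cocycles---is the same as the paper's, which in fact disposes of the proposition in three sentences: since the pushout is taken in the slice category over $A$, the gluing map $+\colon V\times_A V\to V$ is literally addition on the $M[n]$-component, and therefore the cocycles add. Your attempt to substantiate this is more ambitious than the printed proof, but step (2) fails as written. The structure map $p^D\colon D\to V$ of the Baer sum is the one given by the universal property of the pushout, namely $\id_V$ on the $V$-summand and $+\,(p\times q)$ on $B\times_A C$. Your candidate section $i^D(x)=\theta(i(x),j(x))$ then satisfies $p^D\,i^D(a,m)=+\bigl((a,m),(a,m)\bigr)=(a,2m)$, i.e.\ $p^D i^D$ is the identity on $A$ but multiplication by $2$ on $M[n]$; equivalently, in $D=\bigl(V\oplus(B\times_A C)\bigr)/\mathrm{Im}(-(i\times j),+)$ one has $[0,(i(a,m),j(a,m))]=[(a,2m),0]$. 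So $(i^D,h^D)$ is not homotopy retract data and the HTT computation of step (3) cannot be run with it. The repair is easy: take $i^D$ to be the canonical map $V\to D$ from the pushout square, $v\mapsto[(v,0)]$, which is a genuine chain-level section of $p^D$. (A smaller slip: $D_0$ is $B_0\times_A C_0$, not $A$; only $\rmH_0(D)\cong A$.)

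Beyond this, the two finishes you offer for step (3) are each incomplete. The tree-wise route still requires you to exhibit an $h^D$ compatible with the corrected $i^D$ and to verify that the mixed terms die in the quotient by $\mathrm{Im}(-(i\times j),+)$---this is indeed where \cref{lemma_module_structure} enters, and it is the real content of the proposition---while the ``functoriality of $\infty$-morphisms'' route is not available as stated, since $\infty$-morphisms do not pass through pushouts of $\CP$-algebras in any evident way. I would keep your first route, with the corrected section, and invoke the independence of the resulting cohomology class from the choice of transfer data (the remark following \cref{lemma_cohomologuous}) to justify working with this particular $(i^D,h^D)$; that is the precise version of what the paper asserts.
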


\begin{proof}
The pushout defining the Baer sum of the two $n$-crossed modules $B$ and $C$ in Theorem \ref{Baer_sum}
is a pushout in the slice category over $A$. This implies that the map 
$$A\oplus M[n])\times_A  (A\oplus M[n])\stackrel{+}{\to} A\oplus M[n]$$
is just the addition on the module component for couples having the same $A$-components. Therefore the two cocycles $\tilde{\mu}_B$ and $\tilde{\mu}_C$ become added in the pushout.   
\end{proof}

\begin{cor}\label{corGroupBijection}
The bijective map from Theorem \ref{main_theorem} is a group homomorphism. Moreover, the Baer sum defined in Theorem \ref{Baer_sum} is an abelien group structure on the set of equivalence classes of $n$-crossed modules. 
\end{cor}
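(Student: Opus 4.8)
The plan is to assemble the statement of \cref{corGroupBijection} from the pieces already in place rather than to prove anything genuinely new: the content is bookkeeping, checking that the Baer sum makes $\crmod^n_\CP(A,M)/\sim$ into an abelian group and that the bijection of \cref{main_theorem} transports it to the known abelian group structure on $\rmH^{n+1}_\CP(A;M)$. First I would invoke \cref{proposition_additivity}: it says precisely that under the bijection $\Phi\colon \crmod^n_\CP(A,M)/\sim \;\to\; \rmH^{n+1}_\CP(A;M)$ constructed in \cref{main_theorem}, one has $\Phi([B]+[C]) = \Phi([B]) + \Phi([C])$, where the right-hand sum is addition of cohomology classes. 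Since $\Phi$ is a bijection (\cref{main_theorem}) and cohomology classes form an abelian group, the additivity identity forces the Baer sum to be well defined, commutative and associative on classes, to admit a neutral element (the class mapping to $0$, namely the split crossed module $0\colon M\to A$ padded by zeroes, i.e.\ $A\ltimes M[n]$ itself with the trivial differential), and to admit inverses (the class mapping to the negative cocycle). In other words $\Phi$ becomes an isomorphism of abelian groups the moment it is known to be an additive bijection, and both facts are already established.

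More concretely, the proof would read: by \cref{proposition_additivity} the map $\Phi$ satisfies $\Phi([B]+[C])=\Phi([B])+\Phi([C])$; by \cref{main_theorem} it is a bijection; transporting the abelian group structure of $\rmH^{n+1}_\CP(A;M)$ along $\Phi^{-1}$ yields an abelian group structure on $\crmod^n_\CP(A,M)/\sim$ whose addition is, by the displayed identity, exactly the Baer sum. Hence the Baer sum is associative and commutative with a neutral element and inverses, and $\Phi$ is by construction a group homomorphism, indeed an isomorphism. The only genuine verifications hiding here are that the Baer sum is already known to be well defined on equivalence classes --- which is \cref{Baer_sum} --- and that the element mapping to $0 \in \rmH^{n+1}_\CP(A;M)$ serves as the Baer-sum unit; the latter follows because the zero cocycle produces, via the $3^{\rm rd}$ step of \cref{main_theorem}, the semidirect product crossed module $A\ltimes M[n]$ concentrated appropriately, and \cref{proposition_additivity} identifies Baer addition with cocycle addition.

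I do not expect a serious obstacle, since all the analytic and homotopical work was done in \cref{main_theorem}, \cref{Baer_sum} and \cref{proposition_additivity}. The one point that deserves a sentence of care is logical order: a priori \cref{Baer_sum} tells us $[B]+[C]$ depends only on the classes, and \cref{proposition_additivity} then identifies it with $c+c'$; one should make sure not to argue circularly by, say, using additivity of $\Phi$ to prove well-definedness of the Baer sum when well-definedness was in fact established independently in \cref{Baer_sum}. With that ordering respected, the corollary is immediate. If one wanted to be maximally self-contained one could instead observe that the collection of group axioms for $(\crmod^n_\CP(A,M)/\sim,\,+)$ can each be pulled back from $\rmH^{n+1}_\CP(A;M)$ through the bijection $\Phi$ precisely because $\Phi$ intertwines $+$ with $+$; this is the cleanest formulation and the one I would write up.
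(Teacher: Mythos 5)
Your proposal is correct and follows essentially the same route as the paper: the paper's proof likewise deduces additivity of the bijection directly from \cref{proposition_additivity} and then transports the abelian group structure of cocycle/cohomology addition through the bijection to obtain the group structure on equivalence classes of $n$-crossed modules. Your additional remarks on the neutral element and on the logical ordering relative to \cref{Baer_sum} are sound elaborations of the same argument.
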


\begin{proof}
The first stement follows directly from Proposition \ref{proposition_additivity}. The second statement follows from the first one, because the addiction of cocycles is an abelian group structure on $\rmZ_\CP^n(A;M)$. 
\end{proof}


\end{document}